\newtheorem{theorem}[subsection]{Theorem}
\newtheorem{proposition}[subsection]{Proposition}
\newtheorem{definition}[subsection]{Definition}
\newtheorem{claim}[subsection]{Claim}
\def\loccitt{\emph{loc. cit.}}
\def\loccit{\emph{loc. cit. }}
\def\fgl{{\mathfrak{gl}}}
\def\fV{{\mathfrak{V}}}
\def\fW{{\mathfrak{W}}}
\def\fZ{{\mathfrak{Z}}}
\def\BA{{\mathbb{A}}}
\def\BC{{\mathbb{C}}}
\def\BN{{\mathbb{N}}}
\def\BP{{\mathbb{P}}}
\def\BQ{{\mathbb{Q}}}
\def\BZ{{\mathbb{Z}}}
\def\CE{{\mathcal{E}}}
\def\CF{{\mathcal{F}}}
\def\CL{{\mathcal{L}}}
\def\CM{{\mathcal{M}}}
\def\CO{{\mathcal{O}}}
\def\CS{{\mathcal{S}}}
\def\CU{{\mathcal{U}}}
\newcommand\CV{{\mathcal{V}}}
\def\tCF{\widetilde{\CF}}
\def\tCU{\widetilde{\CU}}
\def\Hom{\textrm{Hom}}
\def\e{\varepsilon}
\def\td{\tilde{d}}
\def\vs{\varsigma}
\def\pt{\textrm{pt}}
\def\and{\textrm{ }\&\textrm{ }}
\def\sym{\textrm{Sym}}
\def\op{\text{op}}
\def\tCF{\widetilde{\CF}}
\def\sym{\textrm{Sym}}
\def\nn{{{\BN}}^n}
\def\zz{{{{\mathbb{Z}}}^n}}
\def\UU{{U_{q,\oq}(\ddot{\fgl}_n)}}
\def\UUm{{U^-_{q,\oq}(\ddot{\fgl}_n)}}
\def\UUp{{U^+_{q, \oq}(\ddot{\fgl}_n)}}
\def\UUpm{{U^\pm_{q, \oq}(\ddot{\fgl}_n)}}
\def\UUl{{U^\leq_{q, \oq}(\ddot{\fgl}_n)}}
\def\UUg{{U^\geq_{q, \oq}(\ddot{\fgl}_n)}}
\def\UUup{{U_{q,\oq}^\uparrow(\ddot{\fgl}_n)}}
\def\wUUup{{\widehat{U}_{q,\oq}^\uparrow(\ddot{\fgl}_n)}}
\def\bd{{\mathbf{d}}}
\def\bk{{\mathbf{k}}}
\def\bl{{\mathbf{l}}}
\def\br{{\mathbf{r}}}
\def\bs{{\boldsymbol{\vs}}}
\def\la{{\lambda}}
\def\sq{{\square}}
\def\bsq{{\blacksquare}}
\def\lamu{{\lambda \backslash \mu}}
\def\bla{{\boldsymbol{\la}}}
\def\bmu{{\boldsymbol{\mu}}}
\def\blamu{{\boldsymbol{\lamu}}}
\def\syt{{\text{SYT}}}
\def\oQ{{\overline{Q}}}
\def\oq{{\overline{q}}}
\def\bari{\bar{i}}
\def\barj{\bar{j}}
\def\bark{\bar{k}}
\def\wa{\widehat{a}}
\def\wb{\widehat{b}}
\def\barf{\bar{f}}
\def\zzz{\frac {\BZ^2}{(n,n)\BZ}}
\def\fff{\mathbb{Q}(q,\oq^{\frac 1n})}
\def\tbd{\widetilde{\bd}}
\begin{document}
	
\title[Toward AGT for parabolic sheaves]{\large{\textbf{Toward AGT for parabolic sheaves}}}
\author[Andrei Negu\cb t]{Andrei Negu\cb t}
\address{MIT, Department of Mathematics, Cambridge, MA, USA}
\address{Simion Stoilow Institute of Mathematics, Bucharest, Romania}
\email{andrei.negut@gmail.com}
	
\maketitle
	
	
\begin{abstract} We construct explicit elements $W_{ij}^k$ in (a completion of) the shifted quantum toroidal algebra of type $A$, and show that these elements act by 0 on the $K$--theory of moduli spaces of parabolic sheaves. We expect that the quotient of the shifted quantum toroidal algebra by the ideal generated by the elements $W_{ij}^k$ will be related to $q$--deformed $W$--algebras of type $A$ for arbitrary nilpotent, which would imply a $q$--deformed version of the AGT correspondence between gauge theory with surface operators and conformal field theory.
	
\end{abstract}

\section{Introduction}

\subsection{} A standard, but very fruitful, problem in geometric representation theory is to realize the cohomology groups of moduli spaces as modules for appropriate algebras. One of the classical instances of this principle was Grojnowski's (\cite{G}) and Nakajima's (\cite{Nak}) discovery of the fact that Heisenberg algebras act on the cohomology groups of Hilbert schemes of points on surfaces. This led to a representation--theoretic interpretation of the Betti numbers of Hilbert schemes, and spurred a great deal of innovation and interest into quiver varieties (\cite{Nak 2}). \\

\noindent A slightly more recent, although related, development in the field was the following mathematical formulation of the Alday-Gaiotto-Tachikawa (AGT, \cite{AGT}) correspondence between certain gauge theories and certain conformal field theories:
\begin{equation}
\label{eqn:agt h}
W(\fgl_r) \curvearrowright H^*_{\text{equiv}}(\text{moduli of rank }r \text{ sheaves on } \BA^2)
\end{equation} 
where the left-hand side is the universal enveloping algebra of the $W$--algebra of $\fgl_r$ (which is just the Heisenberg algebra when $r=1$ and the Heisenberg-Virasoro algebra when $r=2$) and the moduli space in the right-hand side parametrizes torsion-free rank $r$ coherent sheaves on $\BP^2$ which are framed along $\infty$. Relation \eqref{eqn:agt h} was proved in \cite{MO} and \cite{SV}. \\

\noindent In \cite{W}, we introduced a new approach to proving \eqref{eqn:agt h}, using explicit correspondences to define the $W$--algebra action. This allowed us to deform the AGT correspondence from cohomology to algebraic $K$--theory: 
\begin{equation}
\label{eqn:agt k}
_qW(\fgl_r) \curvearrowright K_{\text{equiv}}(\text{moduli of rank }r \text{ sheaves on } \BA^2)
\end{equation} 
where the left-hand side is now the deformed $W$--algebra, as defined by \cite{AKOS, FF, FR1}. Our intersection-theoretic approach also allowed us to generalize the statement to other smooth projective surfaces $S$ instead of $\BA^2$ (\cite{W surf, Hecke}). The strategy to proving \eqref{eqn:agt k} is to take the action (\cite{FT1, SV2}) of the quantum toroidal algebra $U_{q,\oq}(\ddot{\fgl}_1)$ on the right-hand side of this equation, to construct an explicit quotient:
\begin{equation}
\label{eqn:bk 1}
\widehat{U}^\uparrow_{q,\oq}(\ddot{\fgl}_1) \twoheadrightarrow {_qW(\fgl_r)}
\end{equation}
\footnote{The superscript $\uparrow$ refers to a certain half of the quantum toroidal algebra, which we will review in Subsection \ref{sub:asi}. The hat refers to a completion which we will review in Subsection \ref{sub:hat}} and to show that the action on $K$--theory factors through this quotient. In \cite{BK, RS}, the authors showed that finite $W$--algebras in type $A$ are quotients of shifted Yangians, and \eqref{eqn:bk 1} can be perceived as an affine, $q$--deformed version of this statement in the case of principal nilpotent. Therefore, a natural question to ask is whether one can generalize the statement \eqref{eqn:bk 1} to the deformed $W$--alegbras corresponding to arbitrary type $A$ nilpotent, a.k.a. Jordan normal form $\br = (r_1,...,r_n)$:
\begin{equation}
\label{eqn:bk 2}
\widehat{U}^\uparrow_{q,\oq}(\ddot{\fgl}_n)^\br \stackrel{?}\twoheadrightarrow {_qW(\fgl_{r_1+...+r_n})^\br}
\end{equation}
where the left-hand side is the shifted quantum toroidal algebra associated to $\br$ (we recall its definition in Subsection \ref{sub:shift}). However, at this point we run into a snag: the right-hand side of \eqref{eqn:bk 2} has not yet been defined for general nilpotent (although we plan to tackle the case of rectangular nilpotent $\br = (r,...,r)$ in \cite{Upcoming}). \\

\noindent Therefore, in the present paper, we take a geometric approach to understanding \eqref{eqn:bk 2}. The quantum toroidal algebra in the left-hand side of this relation is known (\cite{FT2}) to act on the $K$--theory of the following moduli space:
\begin{equation}
\label{eqn:new}
U_{q,\oq}(\ddot{\fgl}_n)^\br \curvearrowright K^\br = K_{\text{equiv}}\left( \begin{array}{c} \text{moduli of sheaves on }\BA^2 \text{ with parabolic} \\ \text{structure of type }r_1,...,r_n \text{ along } \BA^1 \end{array} \right)
\end{equation}
(when $\br = (1,...,1)$, the moduli space above is the well-known affine Laumon space). \\

\begin{theorem}
\label{thm:main}

There exist elements: 
\begin{equation}
\label{eqn:w first}
W_{ij}^k \in \widehat{U}^\uparrow_{q,\oq}(\ddot{\fgl}_n)^\br
\end{equation}
defined in \eqref{eqn:w gen} for all:
$$
(i,j) \in \zzz, \ k \in \BZ
$$
such that $W_{ij}^k$ acts by 0 in the right-hand side of \eqref{eqn:new} if $k > r_j$. \\

\end{theorem}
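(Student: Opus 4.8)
The plan is to realize the elements $W_{ij}^k$ geometrically as explicit correspondences on the $K$-theory $K^\br$ and to show that the relevant correspondence is supported on a locus that is empty (or whose contribution vanishes) precisely when $k > r_j$. So first I would unpack the definition in \eqref{eqn:w gen}: the $W_{ij}^k$ are built as noncommutative polynomials in the generating currents of $\UUup$, and the superscript $k$ should track a ``degree'' or ``order of vanishing/pole'' in the spectral variable. The key is that the action of each generating current on $K^\br$ is given by a geometric Hecke-type correspondence (adding/removing parabolic data along $\BA^1$), so the composite operator $W_{ij}^k$ acts via an iterated correspondence whose kernel I would write explicitly as a class on a product of parabolic moduli spaces.

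The heart of the argument is a vanishing statement, and I would reduce it to a \emph{pole-order} or \emph{matrix-coefficient} computation. Concretely, I would fix a torus-fixed point (a collection of Young-diagram-type combinatorial data indexing the parabolic sheaf) and compute the matrix coefficient of $W_{ij}^k$ between two fixed points by localization. Each generating current contributes an explicit rational function in the equivariant parameters and the Chern roots, with prescribed zeros and poles dictated by the tautological bundles of the moduli space \eqref{eqn:new}. The claim $k > r_j \Rightarrow W_{ij}^k = 0$ should then follow because the $j$-th parabolic step has rank $r_j$, so the corresponding tautological bundle has rank $r_j$, and any operator encoding a $k$-th ``symmetric function'' or $k$-fold wedge in that bundle must vanish once $k$ exceeds its rank. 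In other words, the vanishing is an incarnation of the identity $\Lambda^k(\text{bundle of rank } r_j) = 0$ for $k > r_j$, transported through the correspondence.

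The main steps, in order, would be: (i) write down the explicit correspondence/kernel presentation of the action of $\UUup$ on $K^\br$ coming from \cite{FT2}, and identify the tautological bundles whose ranks are $r_1, \dots, r_n$; (ii) express $W_{ij}^k$ as a specific shuffle-algebra element or iterated current product, and translate it into an operator whose symbol is (a residue of) a rational function carrying the index $k$ as the order of a certain factor; (iii) apply localization to reduce the vanishing to a statement about matrix coefficients at fixed points; and (iv) show that for $k > r_j$ every such matrix coefficient is zero, because the relevant combinatorial sum telescopes/collapses once $k$ exceeds the number of available boxes in the $j$-th color, equivalently because a rank-$r_j$ bundle has no nonzero exterior power in degree $>r_j$.

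I expect the main obstacle to be step (ii), namely matching the purely algebraic definition of $W_{ij}^k$ in the completed shifted algebra $\wUUup$ with a clean geometric operator whose vanishing is manifest. The elements live in a completion, so one must control infinite sums and make sense of the operator on $K^\br$ (e.g.\ checking it acts with finitely many nonzero matrix coefficients on each fixed point, so that the completion causes no convergence issue). A secondary difficulty is bookkeeping the shift datum $\br$ correctly: the shifted algebra's defining relations depend on $\br$, and the vanishing threshold $r_j$ enters through these shifts, so one must verify that the algebraic element $W_{ij}^k$ really does degenerate exactly at $k = r_j + 1$ rather than at some nearby value. Once the geometric kernel is pinned down, the vanishing itself ought to be a short rank/exterior-power argument, but establishing the dictionary between \eqref{eqn:w gen} and the correspondence is where the real work lies.
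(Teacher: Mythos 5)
Your overall frame --- realize $W_{ij}^k$ as an explicit correspondence on $K^\br$ and tie the threshold $r_j$ to the ranks of the universal sheaves --- matches the paper's setup (the paper indeed writes the action of \eqref{eqn:w gen} as the geometric kernel \eqref{eqn:correspondence} on the schemes $\fV_{i;s;j}$). But the mechanism you propose in step (iv) fails. The universal sheaves are \emph{virtual} classes, $[\CU_s] = [\CO^{r_s}] - (1-q^2)([\CV_s]-[\CV_{s-1}])$ as in \eqref{eqn:universal}, not honest bundles of rank $r_s$: their exterior powers $\wedge^b(\CU_s)$ do \emph{not} vanish for $b$ above the rank, equivalently $\psi^+_{s,b}\neq 0$ for all $b\geq 0$. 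What is true is only that the matrix coefficients of the series $\psi^+_s(y)$ are rational functions of $y$ with a pole of order exactly $r_s$ at $y=0$ --- a statement about pole order, not about termwise truncation. Consequently no individual summand $e_{[s;i),a}\,\psi^+_{s,b}\,f_{[s;j),c}$ of \eqref{eqn:w gen} vanishes, and no single fixed-point matrix coefficient collapses by a rank bound: $W_{ij}^k$ is an infinite sum over $s \leq \min(i,j)$, and the vanishing for $k > r_j$ is a cancellation across the entire sum. Note also that the relevant ``rank'' is $r_s$ for every $s$ (via the pole order of $\psi^+_s$), with $r_j$ entering only through the shift $a+b+c = k+r_s-r_j$ built into the definition --- not through a single rank-$r_j$ tautological bundle at the $j$-th step, as your write-up suggests.

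The missing idea is the telescoping identity that makes this cancellation manifest. The paper introduces the auxiliary correspondences $\fW_{i;s;j}$ with maps $\delta$ (the locus $\CF'_\bullet = \CF''_\bullet$) and $\e$ (forgetting $\tCF_\bullet$), and invokes Lemma 6.2 of \cite{W} (equation \eqref{eqn:above}), which expresses $\delta_*(\CL^k) - \e_*(\CL^{k-r_s})\,(-1)^{r_s}q^{2(k-r_s)}\det\CU_s$ as a contour integral $\int_{\infty-0}$ of an explicit rational function in an auxiliary variable $y$. After multiplying by the prefactors $A_s$ of \eqref{eqn:as}, the left-hand sides telescope when summed over all $s \leq \min(i,j)$, yielding \eqref{eqn:sum}: the operator $W_{ij}^k$, which is the total residue at $y=\infty$, equals the total residue at $y=0$ of $y^{k+r_s-r_j}\, e_{[s;i)}(y)\,\psi^+_s(y)\, f_{[s;j)}(y)$. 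Only now does a pole count finish the proof: the currents $e_{[s;i)}(y)$ and $f_{[s;j)}(y)$ are regular at $y=0$ while $\psi^+_s(y)$ has a pole of order exactly $r_s$ there, so the residue at $0$ vanishes precisely when $k+r_s-r_j-1 \geq r_s$, i.e.\ $k > r_j$. So your instinct that residues and pole orders (the correct avatar of ``rank'' in this virtual setting) govern the threshold is sound, and localization could in principle verify the final identity --- but without the geometric input \eqref{eqn:above} converting the infinite sum over $s$ into a single $\int_{\infty-0}$, your outline has no source for the cancellation, and the exterior-power vanishing you rely on is simply false for these virtual classes.
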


\noindent Our hope (too vague at this point to be called a conjecture) is that the quotient:
\begin{equation}
\label{eqn:quotient}
\widehat{U}^\uparrow_{q,\oq}(\ddot{\fgl}_n)^\br \Big / (W_{ij}^k)_{(i,j) \in \zzz}^{k > r_j} 
\end{equation}
is closely related to the (still undefined) $q$--deformed $W$--algebra of type $\fgl_{r_1+...+r_n}$ corresponding to nilpotent of type $\br$. If \eqref{eqn:quotient} itself would deserve the name ``$q$--deformed $W$--algebra", then Theorem \ref{thm:main} would establish the $q$--deformed AGT correspondence in the presence of surface operators. This hope holds for $n=1$ (\cite{W}), and for finite $W$--algebras and arbitrary nilpotent (\cite{BFFR, BK}). Meanwhile, in the mathematical physics literature, the AGT correspondence with surface operators has already been intensely studied (at least in the non-deformed case) with emphasis on computing partition functions (\cite{AT, AFKMY, KT, N, Wyl}, and others). \\

\noindent We expect Theorem \ref{thm:main} to generalize to other smooth surfaces $S$ instead of $\BA^2$, where the parabolic structure is taken along any smooth curve $C \subset S$ instead of $\BA^1 \subset \BA^2$, following the proof of \cite{W surf} and \cite{Hecke} in the principal nilpotent case. Although we do not foresee major difficulties in the overall strategy, we make no claims about the technical statements involved in this generalization. \\

\noindent As the present paper is meant as an introduction to our viewpoint on the connection between deformed $W$--algebras and moduli spaces of parabolic sheaves, we prioritize presenting the main ideas over completeness of the proofs. More concretely, many results herein were proved in special cases (either $n=1$ or $r_1=...=r_n=1$) in the author's previous works \cite{W,Tor,Aff,Surf,W surf,Hecke}. Whenever this happens and the proof of the general case is completely analogous to the special case, we will only provide a sketch of a proof and bibliographical references. \\

\noindent I would like to thank Michael Finkelberg, Davide Gaiotto, Ryosuke Kodera, Francesco Sala, Yuji Tachikawa and Alexander Tsymbaliuk for numerous very interesting discussions on shifted quantum toroidal algebras, moduli spaces of parabolic sheaves and AGT. I gratefully acknowledge the NSF grants DMS--1760264 and DMS--1845034, as well as support from the Alfred P. Sloan Foundation. \\

\section{The shifted quantum toroidal algebra}

\subsection{} 
\label{sub:shift}

Many variables in the present paper will be assigned a ``color" $\in \BZ$. We will consider color-dependent rational functions, such as:
\begin{equation}
\label{eqn:zeta}
\zeta \left( \frac zw \right) = \left( \frac {zq \oq^{2\left \lceil \frac {i-j}n \right \rceil} - wq^{-1}}{z\oq^{2\left \lceil \frac {i-j}n \right \rceil} - w} \right)^{\delta_{\bari}^{\barj} - \delta_{\overline{i+1}}^{\barj}}
\end{equation}
where $z$ and $w$ are variables of color $i$ and $j$, respectively. Above and hereafter, we write $\bark$ for the residue class of the integer $k$ in the set $\{1,...,n\}$. \\

\begin{definition}
	
For $\br \in \nn$, the \textbf{shifted quantum toroidal algebra} is:
$$
\UU^{\br} = \fff \Big \langle e_{i,k}, f_{i,k}, \psi^{\pm}_{i,k'}\Big \rangle^{i \in \{1,...,n\}}_{k \in \BZ, k' \in \BN \sqcup 0} \Big/\text{relations \eqref{eqn:rel tor 0}--\eqref{eqn:rel tor 5}}
$$
where for $e_i(z) = \sum_{k \in \BZ} \frac {e_{i,k}}{z^k}$, $f_i(z) = \sum_{k \in \BZ} \frac {f_{i,k}}{z^k}$, $\psi_i^\pm(z) = (-1)^{r_i\delta_\pm^+} \sum_{k=0}^\infty \frac {\psi_{i,k}^\pm}{z^{\pm k}}$, set:
\begin{equation}
\label{eqn:rel tor 0}
[\psi_{i,k}^\pm, \psi_{i',k'}^{\pm'}] = 0
\end{equation}
\begin{align}
e_i(z) e_j(w) \zeta \left( \frac wz \right) &= e_j(w) e_i(z) \zeta \left( \frac zw \right) \label{eqn:rel tor 1} \\
f_i(z) f_j(w) \zeta \left( \frac zw \right) &= f_j(w) f_i(z) \zeta \left( \frac wz \right) \label{eqn:rel tor 2} 
\end{align}
\begin{align} 
e_i(z) \psi^\pm_j(w) &= \psi^\pm_j(w) e_i(z) \zeta \left( \frac zw \right)^{-1} \label{eqn:rel tor 3} \\
f_i(z) \psi^\pm_j(w) &= \psi^\pm_j(w) f_i(z) \zeta \left( \frac zw \right) \label{eqn:rel tor 4} 
\end{align} 
and:
\begin{equation}
\label{eqn:rel tor 5}
\Big[ e_i(z), f_j(w) \Big] = \delta_i^j  \delta \left( \frac zw \right) (q^{-2} - 1) \left[ z^{r_{i+1}-r_i} \frac{ \psi^+_{i+1}(zq^2)}{\psi^+_{i}(z)} - \frac{ \psi^-_{i+1}(wq^2)}{\psi^-_{i}(w)} \right]
\end{equation}
In all relations above, $z$ and $w$ are thought of as variables of colors $i$ and $j$, respectively. We allow $i,j$ to be arbitrary integers, by setting: 
\begin{align}
&e_i(z) = e_{i-n}(z\oq^2) \cdot \oq^{-r_{i+1}} \label{eqn:period 1} \\
&f_i(z) = f_{i-n}(z\oq^2) \cdot \oq^{r_i} \label{eqn:period 2} \\
&\psi^\pm_i(z) = \psi^\pm_{i-n}(z\oq^2) \cdot (q^{r_1+...+r_n} \oq^{r_i})^{\pm 1} \label{eqn:period 3}
\end{align}
for all $i \in \BZ$. \\

\end{definition} 

\noindent The shifted quantum toroidal algebra only depends on the successive differences of the natural numbers $r_i$. With this in mind, the usual (or unshifted) quantum toroidal algebra is simply the case when $\br = (r,...,r)$ for any $r \in \BN$:
\begin{equation}
\label{eqn:unshifted}
\UU = \UU^{(r,...,r)}
\end{equation}
We note that $\UU$ is customarily defined with two central elements (which would be specialized to $q^{nr}\oq^r$ and $1$ in our notation) and with the extra relation $\psi_{i,0}^+ \psi_{i,0}^- = 1$, which we do not impose in the present paper. Thus, we slightly abuse terminology by calling \eqref{eqn:unshifted} the ``quantum toroidal algebra". \\

\subsection{} 
\label{sub:subalgebras} 

We observe that the subalgebras:
\begin{align}
&\UUp^{\br} = \Big \langle e_{i,k} \Big \rangle^{i \in \{1,...,n\}}_{k \in \BZ} \subset \UU^{\br} \label{eqn:sub 1} \\
&\UUm^{\br} = \Big \langle f_{i,k} \Big \rangle^{i \in \{1,...,n\}}_{k \in \BZ} \subset \UU^{\br} \label{eqn:sub 2} \\
&\UUg^{\br} = \Big \langle e_{i,k}, \psi_{i',k'}^+ \Big \rangle^{i,i' \in \{1,...,n\}}_{k \in \BZ, k' \in \BN \sqcup 0} \subset \UU^{\br} \label{eqn:sub 3} \\
&\UUl^{\br} = \Big \langle f_{i,k}, \psi_{i',k'}^- \Big \rangle^{i,i' \in \{1,...,n\}}_{k \in \BZ, k' \in \BN \sqcup 0} \subset \UU^{\br} \label{eqn:sub 4}
\end{align}
do not actually depend on the shift, and so are isomorphic to the corresponding subalgebras in the unshifted ($\br = (r,...,r)$) case. In fact, \eqref{eqn:sub 3} and \eqref{eqn:sub 4} are bialgebras with respect to a certain topological coproduct, and there exists a pairing:
$$
\UUg \otimes \UUl \stackrel{\langle \cdot, \cdot \rangle}\longrightarrow \fff
$$
With this in mind, the unshifted quantum toroidal algebra is the Drinfeld double:
\begin{equation}
\label{eqn:dd 0}
\UU \cong \UUg \otimes \UUl 
\end{equation}
where the commutation relations between the factors in \eqref{eqn:dd 0} are governed by:
\begin{equation}
\label{eqn:drinfeld 1}
\langle a_1, b_1 \rangle a_2b_2 = b_1 a_1 \langle a_2, b_2 \rangle
\end{equation}
for all $a \in \UUg, b \in \UUl$. We refer the reader to \cite{Tor} for more details on the claims made above. An analogous statement can be made in the shifted case, in that we have the following isomorphism of vector spaces:
$$
\UU^{\br} \cong  \UUg \otimes \UUl
$$
but the commutation relations between the two subalgebras are now governed by:
\begin{equation}
\label{eqn:drinfeld 2}
\langle a_1, b_1 \rangle a_2b_2 = b_1 a_1 \langle T^\br(a_2), b_2 \rangle
\end{equation}
for all $a \in \UUg, b \in \UUl$, where:
$$
T^\br : \UUg \longrightarrow \UUg
$$
is the bialgebra automorphism given by:
$$
T^\br \left( e_{i,k} \right) = e_{i,k+r_{i+1}-r_i} \qquad \qquad T^\br \left(\psi_{i,k'}^+ \right) = \psi_{i,k'}^+
$$
for all $i\in \{1,...,n\}$ and for all $k \in \BZ$, $k' \in \BN \sqcup 0$. \\

\subsection{} Consider the abelian group $\nn$ with the following pairing:
\begin{equation}
\label{eqn:pairing}
\nn \times \nn \stackrel{\langle \cdot , \cdot \rangle}\longrightarrow \BZ, \qquad \qquad \langle \bk, \bl \rangle = \sum_{i=1}^n k_il_i - k_{i-1}l_i
\end{equation}
where we set $k_0 = k_n$. We will encounter the following elements of $\nn$:
$$
\bs^i = \underbrace{(0,...,0,1,0,...,0)}_{1 \text{ on }i\text{--th position}}
$$
and we define:
$$
[i;j) = - [j;i) = \begin{cases} \bs^i + ... + \bs^{j-1} &\text{if } i\leq j \\ -\bs^j - ... - \bs^{i-1} &\text{if } i>j \end{cases}
$$
for all $(i,j) \in \BZ^2$. The algebra $\UU$ is graded by $\zz \times \BZ$, with:
\begin{align*}
&\deg e_{i,k} = (\bs^i,k) \qquad \qquad \deg \psi_{i,k}^+ = (0, k) \\
&\deg \psi_{i,k}^- = (0, r_i - k) \qquad \ \deg f_{i,k} = (-\bs^i,r_{i+1} - r_i + k) 
\end{align*}

\subsection{} As we already noted, the subalgebras $\UUp$ and $\UUm$ do not depend on the shift. We will now recall their shuffle algebra interpretation (\cite{E, FO, Tor}). \\

\begin{definition}
\label{def:shuf classic}
	
Consider the vector space:
\begin{equation}
\label{eqn:shuf classic}
\CV^+ = \bigoplus_{\bd = (d_1,...,d_n) \in \nn} \fff(...,z_{i1},...,z_{id_i},...)^{\emph{Sym}}
\end{equation}
and endow it with an associative algebra structure, by setting $R*R'$ equal to:
$$
\emph{Sym} \left[ \frac {R(...,z_{i1},...,z_{id_i},...)}{d_1!...d_n!} \frac {R'(...,z_{i,d_i+1},...,z_{i,d_i+d'_i},...)}{d_1'! ... d_n'!} \prod_{i,i' = 1}^n \prod^{1 \leq a \leq d_i}_{d_{i'} < a' \leq d_{i'}+d'_{i'}} \zeta \left( \frac {z_{ia}}{z_{i'a'}} \right) \right] 
$$
where $\emph{Sym}$ denotes symmetrization with respect to the variables $z_{i1},...,z_{i,d_i+d_i'}$ for each $1 \leq i \leq n$ separately, and $z_{ia}$ is considered to be a variable of color $i$ for all $a$. \\
	
\end{definition}

\noindent Let $\CS^+ \subset \CV^+$ be the subalgebra generated by $\{ z_{i1}^k \}^{1\leq i \leq n}_{k \in \BZ}$. It is graded by $\nn \times \BZ$: 
$$
\deg R(...,z_{i1},...,z_{id_i},...) = (\bd, k)
$$
if $\bd = (d_1,..,d_n)$, while $k$ is the homogeneous degree of $R$. We will refer to $\bd$ as the ``horizontal degree" and to $k$ as the ``vertical degree" of $R$, respectively. In \cite{Tor}, we constructed the following elements of $\CS^+$ for any $i<j$ and Laurent polynomial $M$:
\begin{equation}
\label{eqn:shuf} 
A^M_{[i;j)} = \sym \left[ \frac {M(z_i,...,z_{j-1})}{\left(1 - \frac {z_i q^2}{z_{i+1}}\right) ... \left(1 - \frac {z_{j-2} q^2}{z_{j-1}}\right)} \prod_{i\leq a < b < j} \zeta \left( \frac {z_b}{z_a} \right)  \right] 
\end{equation}
In order to think of the RHS of \eqref{eqn:shuf} as elements of \eqref{eqn:shuf classic}, we relabel the variables $z_i,...,z_{j-1}$ according to the following rule for all $s \in \{i,...,j-1\}$:
$$
z_s \quad \leadsto \quad z_{\bar{s},\left \lfloor \frac {s-i}n \right \rfloor +1} \oq^{-2\left \lfloor \frac {s-1}n \right \rfloor}
$$
The degree of the element \eqref{eqn:shuf} is $([i;j), \text{hom deg }M)$. \\

\subsection{} 
\label{sub:up} 

Let $\CS^{-} = \CS^{+,\op}$. It was shown in \cite{E} that there exist algebra isomorphisms: 
$$
\UUp \stackrel{\Upsilon^+}\cong \CS^+ \qquad \text{and} \qquad \UUm \stackrel{\Upsilon^-}\cong \CS^{-} 
$$
and we will write:
\begin{equation}
\label{eqn:elements}
e_{[i;j)}^{M} = \Upsilon^+ \left(A_{[i;j)}^M \right) \qquad \text{and} \qquad f_{[i;j)}^{M} = \Upsilon^- \left(A_{[i;j)}^M \right)
\end{equation}
for any Laurent polynomial $M$. The following specific $M$'s will feature in the present paper, for any $k \in \BZ$:
\begin{equation}
\label{eqn:el k}
e_{[i;j),k} = e_{[i;j)}^{M(z_i,...,z_{j-1}) \mapsto z_i^k} 
\end{equation} 
\begin{equation}
\label{eqn:el mu}
e^{(k)}_{[i;j)} = e_{[i;j)}^{M(z_i,...,z_{j-1}) \mapsto \prod_{a=i}^{j-1} (z_a \oq^{\frac {2a}n})^{\left \lceil \frac {(a-i+1)k}{j-i} \right \rceil - \left \lceil \frac {(a-i)k}{j-i} \right \rceil}}
\end{equation}
and the analogous notations for the $f$'s. We will write: 
\begin{equation}
\label{eqn:rule}
e_{[i;i),k} = \delta_k^0
\end{equation}
Consider the subalgebra:
\begin{equation}
\label{eqn:top}
U_{q,\oq}^{\pm,\uparrow}(\ddot{\fgl}_n) \subset \UUpm
\end{equation}
generated by $e_{[i;j)}^{(k)}$ (respectively $f_{[i;j)}^{(k)}$) for all $i < j$ and all $k \in \BN$. \\

\begin{proposition}
\label{prop:contains}

$U_{q,\oq}^{\pm,\uparrow}(\ddot{\fgl}_n)$ contains $e_{[i;j),k}$ (respectively $f_{[i;j),k}$) for all $k \in \BN$. \\

\end{proposition}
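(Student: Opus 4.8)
The plan is to transport the statement to the shuffle algebra $\CS^+$ via the isomorphism $\Upsilon^+$ of Subsection \ref{sub:up}, and there prove by induction on the length $m = j-i$ of the interval that the shuffle element $A^{z_i^k}_{[i;j)}$ underlying $e_{[i;j),k}$ lies in the subalgebra generated by the balanced elements $A^{(k')}_{[i';j')}$ (those underlying $e^{(k')}_{[i';j')}$) with $k' \in \BN$. The base case $m=1$ is immediate: when $[i;j) = [i;i+1)$ the defining Laurent polynomial of $e^{(k)}_{[i;i+1)}$ is $(z_i\oq^{2i/n})^k$, so $e^{(k)}_{[i;i+1)}$ and $e_{[i;i+1),k}$ are proportional by a nonzero scalar, and $e^{(k)}_{[i;i+1)}$ is a generator.

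For the inductive step, the main tool will be the leading-term functional $\rho$ given by the iterated residue along the string $z_{i+1} = z_iq^2,\, z_{i+2} = z_iq^4,\, \dots$ (performed in the variables of \eqref{eqn:shuf}, before the $\oq$-relabeling). Applied to the rational function under the symmetrization in \eqref{eqn:shuf}, the residue consumes the denominator $\big(1 - \frac{z_iq^2}{z_{i+1}}\big)\cdots\big(1 - \frac{z_{j-2}q^2}{z_{j-1}}\big)$ and returns $M$ evaluated along the string, up to a universal constant built from the values of the $\zeta$-factors there. Since the exponents $\lceil (t+1)k/m \rceil - \lceil tk/m\rceil$ defining $e^{(k)}_{[i;j)}$ sum to $k$, both $\rho\big(A^{z_i^k}_{[i;j)}\big)$ and $\rho\big(A^{(k)}_{[i;j)}\big)$ are nonzero scalar multiples of $z_i^k$. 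Hence I would choose a nonzero scalar $c$ with $\rho\big(A^{z_i^k}_{[i;j)} - c\,A^{(k)}_{[i;j)}\big) = 0$.

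It then remains to identify $\ker\rho$ in horizontal degree $[i;j)$ with the span of shuffle products $A^{M'}_{[i;l)} * A^{M''}_{[l;j)}$ over the splittings $i<l<j$ into strictly shorter intervals; this is the factorization statement underlying the coproduct/residue formalism of \cite{Tor}. Granting it, $A^{z_i^k}_{[i;j)} - c\,A^{(k)}_{[i;j)}$ is a combination of such products, each factor of which is — by the induction hypothesis, applied to both the concentrated and the balanced shorter-interval elements — already in the subalgebra; since $A^{(k)}_{[i;j)}$ is itself a generator, the conclusion follows. The $f$-case is identical after applying $\Upsilon^-$ and $\CS^- = \CS^{+,\op}$.

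The main obstacle is the factorization of $\ker\rho$, together with the bookkeeping needed to keep it compatible with the induction. One must check both that the constants produced by the $\zeta$-factors along the string are genuinely nonzero (a genericity computation in the spirit of \cite{Tor}), and — more delicately — that the shorter-interval factors appearing in the product expansion carry nonnegative vertical degrees, so that the induction closes within the range $k' \in \BN$ rather than leaking into negative vertical degree. I expect the latter to follow from a degree and pole-order bound on $A^{z_i^k}_{[i;j)}$ for $k \geq 0$, but it is the point that will require the most care.
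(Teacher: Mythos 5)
Your reduction to the shuffle side and your base case are fine, but the argument hinges entirely on the identification of $\ker\rho$ in horizontal degree $[i;j)$ with the span of products $A^{M'}_{[i;l)} * A^{M''}_{[l;j)}$ over contiguous splittings, and this --- which you yourself flag as the ``main obstacle'' and merely grant --- is precisely where all the content of the Proposition lives. As stated, the claim faces real obstructions: once $j-i > n$, the graded piece of $\CS^+$ in degree $[i;j)$ is strictly larger than the span of the single-interval elements $A^M_{[i;j)}$, the same degree vector is reached by products of intervals in either order (and, up to the $\oq$--periodicity, by decompositions that are not literal concatenations), so a straightening lemma would be needed to confine everything to one fixed contiguity order; moreover, whether the single-string iterated residue $\rho$ actually annihilates all such products is a nontrivial computation with the color-dependent $\zeta$ of \eqref{eqn:zeta}, whose denominators sit at $\oq$--shifts rather than on your $q^2$--string. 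Even granting the factorization, your induction does not close: the shorter-interval factors produced by the expansion carry a priori arbitrary integer vertical degrees, while your inductive hypothesis only covers $k' \in \BN$ --- again a point you flag but do not resolve, and resolving it amounts to reconstructing the slope filtration of \cite{Tor} from scratch.

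The paper's proof sidesteps all of this by citing the PBW theorem of \cite{Tor} directly: the balanced elements \eqref{eqn:contains 2}, as $k$ ranges over $\BZ$, form PBW generators ordered by the slope $k/(j-i)$, and those of positive slope generate exactly the ``slope $>0$'' subalgebra. So it suffices to check that $A^{z_i^k}_{[i;j)}$ has slope $>0$, for which \loccit supplies a clean criterion: the rational function must tend to $0$ when any subset of its variables is sent to $0$ --- immediate from \eqref{eqn:shuf} with $M = z_i^k$, $k>0$. Note that this criterion is exactly the ``degree and pole-order bound'' you were hoping for at the end: positivity of slope is what guarantees that every factor in the resulting PBW decomposition has $k' \in \BN$, simultaneously for all factors, whereas your functional $\rho$ sees only the top term along a single string. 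If you want to salvage your route, the honest path is to prove (or quote) the slope filtration and its multiplicativity from \cite{Tor}, at which point the residue bookkeeping becomes unnecessary.
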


\begin{proof}

Under the isomorphisms $\Upsilon^\pm$, the Proposition reduces to a statement about the shuffle algebra. Specifically, we need to prove that:
\begin{equation}
\label{eqn:contains 1}
A_{[i;j)}^{z_i^k}
\end{equation}
lies in the algebra generated by:
\begin{equation}
\label{eqn:contains 2}
A_{[i;j)}^{\prod_{a=i}^{j-1} (z_a \oq^{\frac {2a}n})^{\left \lceil \frac {(a-i+1)k}{j-i} \right \rceil - \left \lceil \frac {(a-i)k}{j-i} \right \rceil}}
\end{equation}
for all $i<j$ and $k \in \BN$. We showed in \cite{Tor} (equations (1.1), (3.33), (3.77), although the elements denoted by $E$ therein are the antipodes of the elements denoted by $A$ herein) that, as $k$ varies over the entire $\BZ$, the elements \eqref{eqn:contains 2} generate a PBW basis of the shuffle algebra $\CS^+$: any element of the shuffle algebra can be written as a sum of products of the elements \eqref{eqn:contains 2}, taken in increasing order of the slope:
$$
\frac k{j-i} \in \BQ 
$$
If we simply take sums of products of the elements \eqref{eqn:contains 2} when the rational number above is positive (which corresponds to restricting attention to the case $k \in \BN$), then we may realize the subalgebra of $\CS^+$ of slope $>0$ elements, as defined in \cite{Tor}, Definition 3.12.\footnote{Note that \loccit considered subalgebras of elements of slope $< \mu$ for various rational numbers $\mu$, but the $>$ case is analogous, and can be obtained by applying $R(...,z_{ia},...) \mapsto R(...,z_{ia}^{-1},...)$} Therefore, all we need to do is to show that the elements \eqref{eqn:contains 1} have slope $>0$, which by the very definition of this notion in \loccitt, means that the rational function \eqref{eqn:contains 1} goes to 0 when we send any subset of its variable set $\{z_i,...,z_{j-1}\}$ to 0. This is an obvious consequence of \eqref{eqn:shuf} for $M = z_i^k$ with $k>0$. 
	
\end{proof}

\subsection{} 
\label{sub:asi}

Let us define the vector space: 
\begin{equation}
\label{eqn:upper}
\UUup^\br = U_{q,\oq}^{+,\uparrow}(\ddot{\fgl}_n) \otimes \fff [ \psi_{i,k}^+ ]^{1\leq i \leq n}_{k \in \BN} \otimes U_{q,\oq}^{-,\uparrow}(\ddot{\fgl}_n)
\end{equation}
and think of it as a subspace of $\UU^\br$. \\

\begin{proposition}
\label{prop:upper}

$\UUup^\br$ is a subalgebra of $\UU^\br$. \\

\end{proposition}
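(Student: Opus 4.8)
The plan is to exploit the triangular decomposition $\UU^\br \cong \UUg \otimes \UUl$ from Subsection \ref{sub:subalgebras}, under which $\UUup^\br$ is the product of $U^{+,\uparrow}_{q,\oq}(\ddot{\fgl}_n) \otimes \fff[\psi^+_{i,k}] \subseteq \UUg$ with $U^{-,\uparrow}_{q,\oq}(\ddot{\fgl}_n) \subseteq \UUl$. Since each of the three tensor factors is already a subalgebra — the outer two by their definition in Subsection \ref{sub:up}, and the middle one because it is a commutative polynomial algebra — proving that $\UUup^\br$ is closed under multiplication reduces, via the PBW/normal-form property of the double, to straightening the three types of ``out of order'' products of generators: $\psi^+$ past $e^{(k)}_{[i;j)}$, $\psi^+$ past $f^{(k)}_{[i;j)}$, and $f^{(k')}_{[i';j')}$ past $e^{(k)}_{[i;j)}$.

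First I would dispatch the two $\psi^+$-relations. Moving a series $\psi^+_j(w)$ to the right of $e_i(z)$ using \eqref{eqn:rel tor 3} (resp. of $f_i(z)$ using \eqref{eqn:rel tor 4}) only multiplies by the rational function $\zeta(z/w)^{\pm 1}$, a product of ratios of linear factors. The crucial observations are that the series $\psi^+_j(w)$ reappears unchanged — in particular no $\psi^-$ is ever produced — and that multiplication by $\zeta$ preserves the property of vanishing as any subset of shuffle variables tends to $0$, hence preserves positive slope by the criterion recalled in the proof of Proposition \ref{prop:contains}. This shows $U^{+,\uparrow}_{q,\oq}(\ddot{\fgl}_n) \otimes \fff[\psi^+_{i,k}]$ is a subalgebra of $\UUg$ and records the symmetric statement on the $\UUl$ side; I would also note here the routine but necessary point that the coproduct of a positive-slope element again has positive-slope tensor factors dressed by $\psi^+$, a coproduct-compatibility of the slope filtration that I would cite from \cite{Tor}.

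The heart of the matter is the $f$--$e$ straightening, governed by the Drinfeld double relation \eqref{eqn:drinfeld 2} and, at the level of generating series, by the commutator \eqref{eqn:rel tor 5}. Expanding $f^{(k')}_{[i';j')} \cdot e^{(k)}_{[i;j)}$ produces, besides the desired terms of the form $(\text{descendant of }e)\cdot(\text{Cartan})\cdot(\text{descendant of }f)$, a Cartan contribution that a priori splits into a $\psi^+$-part lying in $\UUg$ and a $\psi^-$-part lying in $\UUl$. Since $\UUup^\br$ contains no $\psi^-$ whatsoever, the entire proposition hinges on showing that the $\psi^-$-part does not survive. The mechanism I would use is a vertical-degree bound: in \eqref{eqn:rel tor 5} the factor $z^{r_{i+1}-r_i}\psi^+_{i+1}(zq^2)/\psi^+_i(z)$ contributes only vertical degrees $\geq r_{i+1}-r_i$, whereas $\psi^-_{i+1}(wq^2)/\psi^-_i(w)$ contributes only vertical degrees $\leq r_{i+1}-r_i$; the commutator of two strictly positive-slope generators lands strictly above this threshold once the shift is accounted for, so only $\psi^+$ can occur.

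I expect this last degree bookkeeping to be the main obstacle, precisely because $U^{\pm,\uparrow}_{q,\oq}(\ddot{\fgl}_n)$ are defined by the shift-independent shuffle slope, while the vertical degree that controls the $\psi^+/\psi^-$ dichotomy is shift-dependent: the shift enters through the automorphism $T^\br$ of \eqref{eqn:drinfeld 2}, which moves $e_{i,k}$ to $e_{i,k+r_{i+1}-r_i}$ inside the pairing. Making the bound precise therefore requires computing the relevant commutators on the shuffle side, where $e^{(k)}_{[i;j)}$ and $f^{(k')}_{[i';j')}$ are the explicit elements \eqref{eqn:el mu}, and carefully tracking the degree shifts coming from \eqref{eqn:period 1}--\eqref{eqn:period 3}; I would also have to treat the boundary case where the two vertical degrees coincide, and check that the surviving $\psi^+$-contribution — which involves the inverse series $1/\psi^+_i(z)$ — indeed lies in the appropriate completion of the middle factor $\fff[\psi^+_{i,k}]$. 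Everything outside this slope/degree analysis is formal.
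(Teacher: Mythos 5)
Your overall skeleton is the right one, and you have correctly identified the crux: the only nontrivial point is that straightening $f^{(k')}_{[i';j')} \cdot e^{(k)}_{[i;j)}$ must produce no surviving $\psi^-$'s, and that this should follow from positivity of vertical degree. But the concrete mechanism you propose for this step --- a term-wise vertical-degree threshold applied to \eqref{eqn:rel tor 5} --- fails as stated, because it implicitly treats the generators of $U^{\pm,\uparrow}_{q,\oq}(\ddot{\fgl}_n)$ as if they were supported on positive Drinfeld modes. They are not: $e^{(k)}_{[i;j)}$ and $f^{(k')}_{[i';j')}$ have positive \emph{total} vertical degree (slope $>0$), but as polynomials in the modes $e_{i,l}$, $f_{i,l}$ they involve $l$ of arbitrary sign (only the homogeneous degree of $M$ in \eqref{eqn:shuf} is constrained, not the individual exponents). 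Consequently, the individual commutators $[e_{i,l}, f_{i,l'}]$ appearing when you expand the product genuinely do produce $\psi^-$-modes; the claim ``only $\psi^+$ can occur'' is true only of the total sum, after a nontrivial resummation/cancellation that a per-commutator degree bound cannot see. You yourself flag the remaining ``degree bookkeeping'' on the shuffle side as the main obstacle to be dealt with later --- but that resummation \emph{is} the content of the proposition; what precedes it is a plausibility argument, not a proof. (A secondary point your outline also leaves open: even granting that no $\psi^-$ survives, one must check that the $e$- and $f$-factors of the straightened expression land back in $U^{\pm,\uparrow}_{q,\oq}(\ddot{\fgl}_n)$, not merely in $\UUpm^\br$.)

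The paper's proof packages exactly this cancellation structurally, and it is worth seeing why it succeeds where your route stalls. One inducts on the total horizontal degree $j-i+j'-i'$, using the topological coproduct of \cite{Tor}: for $a = e^{(k)}_{[i;j)}$ and $b = f^{(k')}_{[i';j')}$ the first coproduct legs $a_1, b_1$ again lie in $U^{\pm,\uparrow}_{q,\oq}(\ddot{\fgl}_n)$ with strictly smaller horizontal degree --- this is precisely the coproduct-compatibility of the slope filtration that you cite from \cite{Tor}, but deploy only in the easy $\psi^+$-straightening step, whereas it is needed here. Plugging into the shifted double relation \eqref{eqn:drinfeld 2} yields $ab = ba + \sum b_1 a_1 \cdot \text{const}$, as in \eqref{eqn:drinfeld 3}: all $\psi^-$'s occur only in second legs $b_2$ and are absorbed into the pairing constants $\langle T^\br(a_2), b_2 \rangle$, so they never appear as operators and no cancellation needs to be tracked by hand. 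The only degree argument required is the vanishing $\langle a_1, b_1 \rangle = 0$ for nontrivial $a_1 \in U^{+,\uparrow}_{q,\oq}(\ddot{\fgl}_n)$, $b_1 \in U^{-,\uparrow}_{q,\oq}(\ddot{\fgl}_n)$: both have strictly positive vertical degree while a nonzero pairing forces the degrees to cancel --- this is the clean, sum-level form of your bound, and it sidesteps the shift-tracking through $T^\br$ (which fixes the $\psi^+$'s and merely translates vertical degrees, preserving positivity) as well as your boundary case. Induction then rewrites each lower term $b_1 a_1$ in the normal form \eqref{eqn:implies}. In short: right reduction, right mechanism in spirit, but the key step as you argue it would fail, and the missing resummation is exactly what the coproduct/pairing formalism of \eqref{eqn:drinfeld 2} supplies.
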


\begin{proof} The Proposition is equivalent to showing that:
\begin{equation}
\label{eqn:implies}
ba = \sum^{\text{various}}_{e \in U_{q,\oq}^{+,\uparrow}(\ddot{\fgl}_n), f \in U_{q,\oq}^{-,\uparrow}(\ddot{\fgl}_n)} e \cdot \prod_{\tilde{i}, \tilde{k}} \psi_{\tilde{i},\tilde{k}} \cdot f
\end{equation}
for any:
$$
a = e_{[i;j)}^{(k)} \qquad \text{and} \qquad b = f_{[i';j')}^{(k')}
$$
as well as the analogous statements where one of $a$ and $b$ is replaced by $\psi_{i,k}$ (which are easier, hence we leave them to the interested reader). We will prove this statement by induction on the total $|\text{horizontal degree}| = j-i+j'-i' \in \BN$. Recall the topological coproduct on the algebras $\CS^\pm$ defined in \cite{Tor}:
\begin{align*}
&\Delta (a) = a \otimes 1 + 1 \otimes a + \sum^{\text{various}}_{a_1 \in U_{q,\oq}^{+,\uparrow}(\ddot{\fgl}_n)} a_1 \otimes a_2 \\
&\Delta (b) = b \otimes 1 + 1 \otimes b + \sum^{\text{various}}_{b_1 \in U_{q,\oq}^{-,\uparrow}(\ddot{\fgl}_n)} b_1 \otimes b_2 
\end{align*} 
(we ignore certain Cartan elements from the coproduct, for brevity) where each $a_1$ and $b_1$ in the RHS has $|\text{horizontal degree}|$ strictly smaller than that of $a$ and $b$, respectively, in absolute value. Therefore, \eqref{eqn:drinfeld 2} implies:
\begin{equation}
\label{eqn:drinfeld 3}
ab = ba + \sum^{\text{various}}_{a_1, b_1 \in U_{q,\oq}^{\pm,\uparrow}(\ddot{\fgl}_n)} b_1a_1 \cdot \text{constants}
\end{equation}
The reason why $ab$ is the only term which features in the LHS is that the pairing of any element in $U_{q,\oq}^{+,\uparrow}(\ddot{\fgl}_n)$ with any element in $U_{q,\oq}^{-,\uparrow}(\ddot{\fgl}_n)$ vanishes for degree reasons, except if these elements are both multiples of $1$. Apart from $ba$, all terms in the RHS of \eqref{eqn:drinfeld 3} have $|\text{horizontal degree}|$ smaller than $j-i+j'-i'$. This immediately leads to the induction step necessary to prove \eqref{eqn:implies}.

\end{proof}

\subsection{} 
\label{sub:hat}

Let us consider the completion $\wUUup^\br$ of the algebra \eqref{eqn:top}, defined as:
\begin{equation}
\label{eqn:completion}
\wUUup^\br = \left\{ \sum^{e \in U_{q,\oq}^{+,\uparrow}(\ddot{\fgl}_n) \text{ with } \deg e = (\bd, k)}_{f \in U_{q,\oq}^{-,\uparrow}(\ddot{\fgl}_n) \text{ with } \deg f = (\bd', k')} e \cdot \prod_{\tilde{i}, \tilde{k}} \psi_{\tilde{i},\tilde{k}} \cdot f \right\}
\end{equation}
where any sum in the right-hand side may be infinite but with bounded above $k,k'$, and such that for any $N > 0$, all but finitely many summands have $|\bd|, |\bd'| > N$. By analogy with of Proposition 3.7 of \cite{W surf}, the completion \eqref{eqn:completion} is an algebra. The following elements of \eqref{eqn:completion} will play a very important role in the present paper:
\begin{equation}
\label{eqn:w gen}
W_{ij}^{k} = \sum_{s \leq \min(i,j)} (-1)^{j-s} q^{2\sigma_j - 2 \sigma_s + (j-s)(2k - 2r_j + 1)} \sum_{a+b+c = k + r_s - r_j}^{a \geq 1, b \geq 0, c \geq 1} e_{[s;i),a} \psi^+_{s,b} f_{[s;j),c}
\end{equation} 
\footnote{In the second sum above, we allow $a=0$ if and only if $s=i$ and $c=0$ if and only if $s=j$, in accordance with rule \eqref{eqn:rule}} defined for all $(i,j) \in \zzz$, $k \in \BZ$, where:
\begin{equation}
\label{eqn:sigma} 
\sigma_i = \begin{cases} r_1 + ... + r_i &\text{if } i > 0 \\ - r_{i+1} - ... - r_0 &\text{if } i \leq 0 \end{cases}
\end{equation}
The element \eqref{eqn:w gen} is not quite periodic in $i$ and $j$ modulo $n$, since we have:
\begin{equation}
\label{eqn:period w}
W_{ij}^k = W_{i-n,j-n}^k \cdot q^{r_1+...+r_n} \oq^{2r_j- 2k + \sigma_{j-1} - \sigma_i}
\end{equation}
for all $i,j,k$, but we will tacitly accept this abuse of notation. Note that $\deg W_{ij}^k = (-[i;j),k)$. Our reason for defining the elements \eqref{eqn:w gen} is the hope that the quotient:
\begin{equation}
\label{eqn:w alg}
\wUUup^\br \Big / \left( W_{ij}^{k} \right)_{(i,j) \in \zzz}^{k > r_j}
\end{equation}
is related to the (yet undefined) $q$--deformed $W$--algebra of type $\fgl_{r_1+...+r_n}$, associated to the nilpotent of Jordan type $\br$. The two algebras are isomorphic in the case of principal nilpotent (i.e. $n=1$), when the quotient \eqref{eqn:w alg} was shown in \cite{W} to match the $q$--deformed $W$--algebra that was rigorously defined in \cite{AKOS}, \cite{FF}, \cite{FR1}. We plan to study the case of rectangular nilpotent (i.e. $\br = (r,...,r)$) in \cite{Upcoming}. \\

\noindent For general Jordan type $\br$, our main reason for expecting that the quotient \eqref{eqn:w alg} to deserve being called a $q$--deformed $W$--algebra comes from geometry and physics: we will show in the next Section that the quotient \eqref{eqn:w alg} acts on the $K$--theory groups of moduli spaces of sheaves on $\BA^2$ with parabolic structure of type $\br$, as predicted by the AGT correspondence for gauge theory with surface operators. \\

\section{Moduli spaces of parabolic sheaves}

\subsection{}

\noindent Consider the surface $\BP^1 \times \BP^1$ and the divisors:
\begin{align*}
&D = \BP^1 \times \{0\} \\
&\infty = \BP^1 \times \{\infty\} \cup \{\infty\} \times \BP^1
\end{align*}
for a henceforth fixed choice of points $0,\infty \in \BP^1$. \\

\begin{definition}
\label{def:laumon}

Consider any $\bd, \br \in \nn$, and let $r = r_1+...+r_n$. A \textbf{parabolic sheaf} $\CF$ of degree $\bd$ and framing $\br$ is a flag of torsion-free rank $r$ sheaves: 
\begin{equation}
\label{eqn:flag0}
\CF_\bullet = \Big\{ \CF_n(-D) \subset \CF_1 \subset ... \subset \CF_{n-1} \subset \CF_n \Big\}
\end{equation}
on $\BP^1 \times \BP^1$ such that $c_2(\CF_i) = d_i [\emph{pt}]$, together with a collection of isomorphisms:
$$
\xymatrix{\CF_n(-D)|_{\infty} \ar[r] \ar[d]^\cong & \CF_1|_{\infty} \ar[r] \ar[d]^\cong &  ... \ar[r] \ar[d]^\cong & \CF_{n-1}|_{\infty} \ar[r] \ar[d]^\cong& \CF_n|_{\infty} \ar[d]^\cong \\
\CO_{\infty}^{r}(-D) \ar[r] & \CO_{\infty}^{r_1} \oplus \CO_{\infty}^{r-r_1}(-D) \ar[r] & ... \ar[r] & \CO_{\infty}^{r-r_n} \oplus \CO^{r_n}_{\infty}(-D) \ar[r] & \CO_{\infty}^{r}} \qquad 
$$
Note that the vertical isomorphisms force $c_1(\CF_i) = -(r_{i+1}+...+r_n)D$. We make \eqref{eqn:flag0} into an infinite flag, by setting $\CF_{i+n} = \CF_i(D)$ for all $i \in \BZ$. \\
\end{definition}

\noindent There exists a moduli space $\CM^{\br}_{\bd}$ parametrizing parabolic sheaves of degree $\bd$ and framing $\br$ (see \cite{B, BFG, HL}), which is a smooth, quasi-projective variety. \\

\subsection{}\label{sub:quiver}

The following picture represents the so-called chainsaw quiver (\cite{FR2, Nak 3})): \\

\begin{picture}(200,120)(35,-40)\label{pic:chainsaw}

\put(43,31){\dots}
\put(343,31){\dots}

\put(60,31){\vector(1,0){45}}
\put(72,34){$Y_{i-2}$}

\put(115,31){\vector(1,0){50}}
\put(135,34){$Y_{i-1}$}

\put(175,31){\vector(1,0){50}}
\put(195,34){$Y_{i}$}

\put(235,31){\vector(1,0){50}}
\put(255,34){$Y_{i+1}$}

\put(295,31){\vector(1,0){50}}
\put(315,34){$Y_{i+2}$}

\put(110,31){\circle*{10}}
\put(110,50){\circle{30}}
\put(95,47){$X_{i-2}$}
\put(117,22){$\color{red}{V_{i-2}}$}
\put(102,36){\vector(4,-1){5}}

\put(170,31){\circle*{10}}
\put(170,50){\circle{30}}
\put(155,47){$X_{i-1}$}
\put(177,22){$\color{red}{V_{i-1}}$}
\put(162,36){\vector(4,-1){5}}

\put(230,31){\circle*{10}}
\put(230,50){\circle{30}}
\put(215,47){$X_{i}$}
\put(237,22){$\color{red}{V_{i}}$}
\put(222,36){\vector(4,-1){5}}

\put(290,31){\circle*{10}}
\put(290,50){\circle{30}}
\put(275,47){$X_{i+1}$}
\put(297,22){$\color{red}{V_{i+1}}$}
\put(282,36){\vector(4,-1){5}}

\put(75,-20){\line(1,0){10}}
\put(75,-10){\line(1,0){10}}
\put(75,-20){\line(0,1){10}}
\put(85,-20){\line(0,1){10}}
\put(88,-20){$\color{blue}{\BC^{r_{i-2}}}$}

\put(51,27){\vector(2,-3){25}}
\put(85,-10){\vector(2,3){25}}
\put(68,4){$B_{i-2}$}
\put(98,4){$A_{i-2}$}

\put(135,-20){\line(1,0){10}}
\put(135,-10){\line(1,0){10}}
\put(135,-20){\line(0,1){10}}
\put(145,-20){\line(0,1){10}}
\put(148,-20){$\color{blue}{\BC^{r_{i-1}}}$}

\put(111,27){\vector(2,-3){25}}
\put(145,-10){\vector(2,3){25}}
\put(128,4){$B_{i-1}$}
\put(158,4){$A_{i-1}$}

\put(195,-20){\line(1,0){10}}
\put(195,-10){\line(1,0){10}}
\put(195,-20){\line(0,1){10}}
\put(205,-20){\line(0,1){10}}
\put(208,-20){$\color{blue}{\BC^{r_i}}$}

\put(171,27){\vector(2,-3){25}}
\put(205,-10){\vector(2,3){25}}
\put(188,4){$B_{i}$}
\put(218,4){$A_{i}$}

\put(255,-20){\line(1,0){10}}
\put(255,-10){\line(1,0){10}}
\put(255,-20){\line(0,1){10}}
\put(265,-20){\line(0,1){10}}
\put(268,-20){$\color{blue}{\BC^{r_{i+1}}}$}

\put(231,27){\vector(2,-3){25}}
\put(265,-10){\vector(2,3){25}}
\put(248,4){$B_{i+1}$}
\put(278,4){$A_{i+1}$}

\put(315,-20){\line(1,0){10}}
\put(315,-10){\line(1,0){10}}
\put(315,-20){\line(0,1){10}}
\put(325,-20){\line(0,1){10}}
\put(328,-20){$\color{blue}{\BC^{r_{i+2}}}$}

\put(291,27){\vector(2,-3){25}}
\put(325,-10){\vector(2,3){25}}
\put(308,4){$B_{i+2}$}
\put(338,4){$A_{i+2}$}

\end{picture} 

\noindent where it is understood that the black circles form a cycle of length $n$. More precisely, given $\bd = (d_1,...,d_n)$, let us fix vector spaces $V_1,...,V_n$ of dimensions $d_1,...,d_n$, respectively. Then consider the affine space of linear maps:
\begin{equation}
\label{eqn:affine}
M_\bd^{\br} = \bigoplus_{i=1}^{n} \Big[ \Hom(V_i,V_i) \oplus \Hom(V_{i-1},V_i) \oplus \Hom(\BC^{r_i},V_i) \oplus \Hom(V_{i-1}, \BC^{r_i}) \Big]
\end{equation}
where we identify $V_0$ with $V_{n}$. Elements of the vector space $M_\bd^\br$ will be quadruples $(X_i,Y_i,A_i,B_i)_{1 \leq i \leq n}$ of linear maps as in the Figure above. Consider the morphism:
\begin{equation}
\label{eqn:moment}
M_\bd^{\br} \stackrel{\mu}\longrightarrow \bigoplus_{i=1}^{n} \Hom(V_{i-1},V_i)
\end{equation}
$$
\mu(...,X_i,Y_i,A_i,B_i,...)_{1 \leq i \leq n} = \left(..., X_iY_i - Y_iX_{i-1}+A_iB_i,... \right)_{1 \leq i \leq n} 
$$
We let $GL_\bd := \prod_{i=1}^{n} GL(V_i)$ act on the vector space \eqref{eqn:affine} by conjugation, and let:
$$
M_{\bd}^{\br,\circ} \subset M_{\bd}^{\br}
$$
denote the open subset of quadruples $(X_i, Y_i, A_i, B_i)$ for which there is no collection of subspaces $\{\text{Im } A_i \subseteq W_i \subset V_i\}_{1 \leq i \leq n}$ preserved by the maps $\{X_i, Y_i\}_{1 \leq i \leq n}$. \\

\begin{proposition}

The moduli space of parabolic sheaves admits the presentation:
\begin{equation}
\label{eqn:adhm}
\CM_\bd^{\br} \cong (\mu^{-1}(0) \cap M_{\bd}^{\br,\circ}) / GL_\bd
\end{equation}

\end{proposition}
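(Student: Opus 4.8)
The plan is to exhibit a pair of mutually inverse, $GL_\bd$-equivariant functors between families of parabolic sheaves and families of quiver data satisfying $\mu=0$ and the open condition defining $M^{\br,\circ}_\bd$, following the ADHM/monad methodology used in \cite{B}, \cite{BFG}, \cite{HL}. First I would build the functor from a parabolic sheaf $\CF_\bullet$ to linear-algebra data: for each $i$ set $V_i=H^1\big(\BP^1\times\BP^1,\CF_i(\ast)\big)$, where $(\ast)$ is a suitable twist by a combination of $\infty$ and $D$, so that the framing isomorphisms along $\infty$ force $H^0=H^2=0$ and Riemann--Roch gives $\dim V_i=d_i$, matching $c_2(\CF_i)=-d_i[\text{pt}]$. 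The endomorphism $X_i\in\Hom(V_i,V_i)$ is induced by multiplication by the coordinate $u$ on $\BA^1=D\cap\BA^2$; the map $Y_i\in\Hom(V_{i-1},V_i)$ is assembled from the flag inclusion $\CF_{i-1}\hookrightarrow\CF_i$ together with multiplication by the coordinate $v$ cutting out $D$; and $A_i,B_i$ are extracted from the framing data via the short exact sequences relating $\CF_i$ and $\CF_{i-1}$. A diagram chase shows that the failure of $X$ and $Y$ to commute across the cycle is corrected exactly by $A_iB_i$, giving $X_iY_i-Y_iX_{i-1}+A_iB_i=0$, which is the origin of the relation $\mu=0$.

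Conversely, I would construct the inverse functor via a Beilinson-type monad on $\BP^1\times\BP^1$. Using the exceptional collection $\CO,\CO(1,0),\CO(0,1),\CO(1,1)$ and the multiplicity spaces $V_i$ and $\BC^{r_i}$, one writes a three-term complex of sums of line bundles whose differentials are affine-linear in $u,v$ with coefficients $X_i,Y_i,A_i,B_i$. The equation $\mu=0$ is precisely the condition that consecutive differentials compose to zero, so that the complex is genuine; the outer terms are arranged so that the periodicity $\CF_{i+n}=\CF_i(D)$ and the framing along $\infty$ are built in. One then shows, using the vanishing forced by the stability condition, that the complex has cohomology only in the middle degree, and that this cohomology $\CF_i$ is torsion-free with $c_1(\CF_i)=-(r_{i+1}+\dots+r_n)D$ and $c_2(\CF_i)=-d_i[\text{pt}]$.

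It then remains to match stability and to check mutual inverseness. I would prove that the condition defining $M^{\br,\circ}_\bd$ --- that no proper $\{X_i,Y_i\}$-invariant collection of subspaces contains the images of the $A_i$ --- is equivalent on the sheaf side to $\CF_i$ being torsion-free with the framing restricting to an isomorphism along $\infty$: cyclicity forces the framing to generate $\CF_i$, ruling out torsion and rank drop, while conversely a genuine parabolic sheaf in the sense of Definition~\ref{def:laumon} produces data admitting no such invariant subspace. On this locus $GL_\bd$ acts freely, so $(\mu^{-1}(0)\cap M^{\br,\circ}_\bd)/GL_\bd$ is a smooth geometric quotient; both functors are manifestly equivariant (a change of basis of the $V_i$), and they are mutually inverse, the nontrivial direction being verified by running the Beilinson spectral sequence on $\CF_i$ and observing that it degenerates onto the monad above. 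Since every construction is flat in families over an arbitrary base, this refines to the claimed isomorphism of schemes.

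The hard part will be the interlocking analytic inputs: establishing $H^0=H^2=0$ and the single-degree concentration of the monad's cohomology uniformly over the base, and pinning down the exact equivalence between the cyclicity condition and the pair (torsion-freeness, framing isomorphism). These are precisely the places where the framing along $\infty$, the twists by $D$, and the periodicity $\CF_{i+n}=\CF_i(D)$ must be tracked with care; once they are settled, the equivariance and the descent to schemes are formal.
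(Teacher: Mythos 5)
You should know at the outset that the paper contains no proof of this Proposition: it is imported from the literature, with the moduli space taken from \cite{B}, \cite{BFG}, \cite{HL} and the chainsaw-quiver presentation attributed to \cite{FR2}, \cite{Nak 3}, after which the paper simply declares that it will pass freely between the two descriptions. So your proposal can only be compared with the standard arguments in those references, and it does reproduce the direct one faithfully: the cohomological definition $V_i = H^1(\CF_i(\ast))$, the moment-map relation $X_iY_i - Y_iX_{i-1} + A_iB_i = 0$ as the composability condition of the Beilinson monad on $\BP^1\times\BP^1$, and --- importantly, and you got this right --- the matching of the open condition defining $M_{\bd}^{\br,\circ}$ with torsion-freeness plus the framing isomorphism, imposing only cyclicity and not cocyclicity, exactly as for torsion-free (rather than locally free) framed sheaves on $\BP^2$. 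There is also a genuinely different route implicit in the citation of \cite{B} that you may prefer: identify parabolic sheaves on $(\BP^1\times\BP^1, D)$ with $\BZ/n$--equivariant framed sheaves on the cyclic cover branched along $D$, apply the already-established ADHM theorem for framed torsion-free sheaves equivariantly, and read off $(X_i,Y_i,A_i,B_i)$ as the isotypic components of the usual data $(B_1,B_2,i,j)$, so that $\mu=0$ and the cyclicity condition are just the color components of $[B_1,B_2]+ij=0$ and of ordinary ADHM stability. That route buys you all the vanishing and base-change statements for free, whereas your direct monad argument must re-establish them in the parabolic setting.

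Two points in your sketch need repair before it is a proof rather than a plan. First, ``multiplication by the coordinate $u$'' is not an endomorphism of $H^1(\CF_i(\ast))$ as written: $u$ has a pole along infinity, so it maps $\CF_i(\ast)$ into a twist by the line at infinity, and you must use the restriction sequence to $\infty$ together with the framing trivialization (which kills $H^0$ and $H^1$ along the framing divisor) to identify the target $H^1$ with $V_i$ again; the same care is needed at the seam, where $Y$ involves $\CF_n(-D)\subset \CF_1$ and the periodicity $\CF_{i+n}=\CF_i(D)$, and in the dimension count, where $c_1(\CF_i)$ being a multiple of $D$ with $D^2=0$ means the Riemann--Roch computation of $\dim V_i = d_i$ depends on the precise mixed twist you choose --- the one unspecified ingredient on which everything else (in particular $H^0=H^2=0$ over an arbitrary base) rests. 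Second, the steps you yourself flag as hard --- the uniform concentration of the monad cohomology in middle degree and the equivalence of cyclicity with (torsion-free, framing isomorphism) --- are precisely the mathematical content of the Proposition, so the proposal as submitted is an accurate and well-organized strategy whose load-bearing lemmas remain to be executed; since they are exactly the lemmas proved in the cited references, none of them would fail, but none of them is yet proved here.
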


\noindent Therefore, we will freely go back and forth between describing points of $\CM_{\bd}^{\br}$ as either flags of sheaves $\CF_\bullet$ or quadruples $(X_i,Y_i,A_i,B_i)$ modulo conjugation by $GL_{\bd}$. The Proposition above is the reason why moduli spaces of framed parabolic sheaves are sometimes called ``chainsaw quiver varieties". \\

\subsection{}\label{sub:toract}

Let us fix a maximal torus $(\BC^*)^{r} \subset \prod_{i=1}^n GL(\BC^{r_i})$. The torus:
\begin{equation}
\label{eqn:torus}
T = \BC^* \times \BC^* \times (\BC^*)^{r}
\end{equation}
acts on $\CM_{\bd}^{\br}$ as follows: $(\BC^*)^{r}$ acts by multiplying the framing isomorphisms in Definition \ref{def:laumon}, while the other two $\BC^*$ factors act as multiplication on the base $\BP^1 \times \BP^1$ of parabolic sheaves. More precisely, we require the torus action to be the square the usual one, such that in terms of quadruples \eqref{eqn:moment}, the action is given by:
\begin{multline*}
(Q,\oQ,U_1,...,U_n) \cdot (..., X_i,Y_i,A_i,B_i,...) = \\ = \left(..., Q^2 X_i, \oQ^{2\delta_i^1} Y_i, A_i U^2_i, Q^2 \oQ^{2\delta_i^1} U_i^{-2} B_i,... \right)
\end{multline*}
for all $(Q,\oQ,U_1,...,U_n) \in (\BC^*)^{r} \times \BC^* \times \BC^*$. We will perceive each $U_i$ as a diagonal matrix acting on $\BC^{r_i}$ (corresponding to the standard basis), and denote its elementary characters by $u_{r_1+...+r_{i-1}+1},...,u_{r_1+...+r_i}$. Therefore, we have:
\begin{equation}
\label{eqn:coordinates}
K_{T}(\pt) = \text{Rep}(T) = \BZ \left[ q^{\pm 1}, \oq^{\pm 1}, u_1^{\pm 1},...,u_r^{\pm 1} \right] 
\end{equation}
where $q,\oq$ are the elementary characters of $\BC^* \times \BC^*$. We will study the localized $T-$equivariant algebraic $K-$theory groups corresponding to the action above:
$$
K^{\br}_\bd := K_{T} \left(\CM^{\br}_{\bd} \right) \bigotimes_{K_{T}(\pt)} \text{Frac}(K_{T}(\pt))
$$
Our main actor is the $\nn-$graded vector space:
\begin{equation}
\label{eqn:ktheory}
K^{\br} = \bigoplus_{\bd \in \nn} K^{\br}_\bd
\end{equation}
over the field $\text{Frac}(K_{T}(\pt)) = \BQ \left(q,\oq,u_1,...,u_r\right)$. \\

\subsection{} \label{sub:tangent}

As elements of $T$--equivariant $K$--theory, we have:
\begin{equation}
\label{eqn:frame}
[\CO^{r_i}] = \sum_{1\leq a \leq r}^{\wa = i} u_a^2 \ \in \ K^\br_\bd
\end{equation}
where we will write $\wa$ for that integer $i$ such that $r_1+...+r_{i-1} < a \leq r_1+...+r_i$. We may extend the notation $\wa$ to all integers $a$ via the formula below:
$$
\widehat{a+rk} = \widehat{a}+nk
$$
Moreover, we have the tautological vector bundles $\CV_i$ on $\CM^\br_\bd$, obtained by descending the vector spaces $V_i$ from Subsection \ref{sub:quiver} onto the quotient \eqref{eqn:adhm}. These vector bundles are equivariant, so they will give rise to elements of $K^\br_\bd$. The same thing is true for the universal sheaves $\CU_i$ on $\CM_\bd^\br$, defined by the property that their fibers over a point \eqref{eqn:flag0} are the restrictions to $(0,0) \in \BA^2$ of the quotients $\CF_i/\CF_{i-1}$, regarded as coherent sheaves on $D \cong \BP^1$. It is well-known that the universal sheaves have resolutions in terms of the tautological bundles, namely:
\begin{equation}
\label{eqn:universal}
[\CU_i] = [\CO^{r_i}] - (1-q^2) \left( [\CV_i] - [\CV_{i-1}] \right) \ \in \ K^\br_\bd
\end{equation}
for all $i \in \{1,...,n\}$, where we set:
\begin{align}
&\CV_{i+n} = \CV_{i} \cdot \oq^{-2} \label{eqn:change 1} \\
&u_{i+r} = u_{i} \cdot \oq^{-1} \label{eqn:change 2} 
\end{align}
for all $i\in \BZ$, which absorbs the dependence on the equivariant parameter $\oq$. \\

\begin{definition}
\label{def:tautological class}

To a Laurent polynomial $f(...,x_{ia},...)^{1 \leq i \leq n}_{a \in \BN}$ which is symmetric in the variables $x_{ia}$ for each $i$ separately, we will associate:
\begin{equation}
\label{eqn:tautological class}
\barf = f(...,\text{Chern roots of }[\CV_i],...) \in K^{\br}_{\bd}
\end{equation}
and call it a \textbf{tautological class} (although $\bd \in \nn$ is not part of the notation, it will either be implied from context or assumed to be arbitrary). \\

\end{definition}

\noindent For example, we have $[\CV_i] = \overline{X_i}$, where $X_i = \sum_{a \in \BN} x_{ia}$, and:
$$
[\CU_i] = \sum_{\wa = i} u_a^2 - (1-q^2) \left(\overline{X_i} - \overline{X_{i-1}} \right)
$$
where we make the convention that $X_{i+n} = X_i \cdot \oq^{-2}$ for all $i \in \BZ$, as in \eqref{eqn:change 1}. \\

\subsection{} \label{sub:fixed}

Because the moduli spaces $\CM^\br_\bd$ are smooth, the Thomason equivariant localization theorem states that restriction to the fixed locus induces an isomorphism:
\begin{equation}
\label{eqn:eq loc 1}
K^\br_\bd \quad \cong \bigoplus_{\bla \in (\CM_\bd^\br)^{T}} \BQ \left(q,\oq,u_1,...,u_r\right)|\bla \rangle 
\end{equation}
of $\BQ \left(q,\oq,u_1,...,u_r\right)$--vector spaces. The symbol $|\bla \rangle$ denotes the $K$--theory class of the skyscraper sheaf at the fixed point $\bla$, renormalized by the exterior algebra of the cotangent space at $\bla$, so that the isomorphism \eqref{eqn:eq loc 1} sends:
\begin{equation}
\label{eqn:eq loc 2}
c \in K^\br_\bd \quad \leadsto \quad \sum_{\bla \in (\CM_\bd^\br)^{T}} c|_\bla \cdot |\bla \rangle
\end{equation}
Therefore, to understand the isomorphism \eqref{eqn:eq loc 1}, we need to understand the restrictions of relevant $K$--theory classes to the torus fixed points. Combinatorially, the fixed points of $\CM_\bd^\br$ are indexed by $\br$--partitions, i.e. collections:
\begin{equation}
\label{eqn:fixed point}
\bla = (\lambda^1,...,\lambda^r) \qquad \text{where} \qquad \la^i = (\la^i_0 \geq \la^i_1 \geq ...)
\end{equation}
for all $i \in \{1,...,r\}$. We identify a partition $\lambda = (\lambda_0 \geq \lambda_1 \geq ... )$ with its Young diagram, i.e. the collection of $1 \times 1$ boxes whose bottom-left corner lies at all points $(x,y)$ with $0 \leq x < \lambda_y$. Similarly, the set of boxes of $\bla$ as in \eqref{eqn:fixed point} will refer to the disjoint union of the sets of boxes of the partitions $\lambda^1,...,\lambda^r$. We assign to the box:
$$
\sq = (x,y) \in \lambda^a  \qquad \begin{cases} \text{color } \wa + y \\ \text{weight } \chi_\sq = u_a q^{2x} \end{cases}
$$
The restriction of the tautological vector bundle $\CV_i$ to the fixed point $\bla$ is given by:
\begin{equation}
\label{eqn:restriction v}
\CV_i|_\bla = \sum_{a=1}^r \sum_{\sq = (x,y) \in \lambda^a}^{\wa+y \equiv i \text{ mod }n} \chi_\sq \cdot \oq^{2\frac {\wa+y-i}n}
\end{equation}
Therefore, we can explicitly describe the image of tautological classes under the identification \eqref{eqn:eq loc 1}. As a consequence of \eqref{eqn:eq loc 2}, this identification sends:
$$
\barf \quad \leadsto \quad \sum_{\br\text{--partitions } \bla} f(...,\chi_\sq \cdot \oq^{2\frac {\wa+y-i}n},...) |\bla \rangle
$$
where the expression $\chi_\sq \cdot \oq^{2\frac {\wa+y-i}n}$ is inserted into an argument of $f$ of the same color modulo $n$ as $\sq$. Note that there exist infinitely many $\br$--partitions, but the fixed points of $\CM^\br_\bd$ are only those $\br$--partitions of size $\bd$. These will be denoted by:
$$
\bla \vdash \bd
$$
which means that the number of boxes of $\bla$ of color $\equiv i$ mod $n$ is equal to $d_i$, $\forall i$. \\

\subsection{} 

Given two parabolic sheaves $\CF_\bullet$ and $\CF'_\bullet$ and $i \in \BZ$, we will write $\CF_\bullet \supset_\circ^i \CF'_\bullet$ if $\CF_j'$ is a colength $\delta_{\barj}^{\bari}$ subsheaf of $\CF_j$ for all $j\in \BZ$, with the quotient supported at the origin $\circ = (0,0) \in \BP^1 \times \BP^1$. Thus, we define the \textbf{simple correspondence} as:
\begin{equation}
\label{eqn:z}
\xymatrix{& \fZ_i \ar[ld]_{p_-} \ar[rd]^{p_+} & \\ \CM^{\br}_{\bd^-} & & \CM^{\br}_{\bd^+}} \qquad \fZ_i = \{\CF_\bullet \supset^i_\circ \CF_\bullet'\}
\end{equation}
for all $\bd_+ = \bd_- + \bs^i$. We also have the line bundle:
\begin{equation}
\label{eqn:l}
\xymatrix{ \CL_i \ar@{-->}[d] \\ \fZ_i}  \qquad \qquad \qquad \CL_i|_{(\CF_\bullet \supset_\circ^i \CF_\bullet')} = \Gamma(\BP^1 \times \BP^1, \CF_i/\CF_i')
\end{equation}
We claim that there exist isomorphisms:
\begin{equation}
\label{eqn:projectivizations}
\xymatrix{\BP(\CU_i) \ar[d] & \fZ_i \ar[ld]_{p_-} \ar[rd]^{p_+} \ar[r]^-{\cong} \ar[l]_-{\cong} & \BP \left(- q^2 \CU_{i+1}^\vee \right) \ar[d] \\ \CM_{\bd_-}^{\br} & & \CM_{\bd_+}^{\br}}
\end{equation}
such that $\CL_i$ is isomorphic to $\CO(1)$ and $\CO(-1)$ on the projectivizations $p_-$ and $p_+$, respectively. In \eqref{eqn:projectivizations}, if $\gamma$ denotes the $K$--theory class of a coherent sheaf $\Gamma$ of projective dimension 1, we abusively write $\BP(\gamma)$ for the dg scheme $\text{Proj}(\text{Sym}(\Gamma))$. We refer the reader to Definition 2.5, Proposition 2.8 and Proposition 2.10 of \cite{Surf} for a proof of \eqref{eqn:projectivizations} in the context of sheaves on surfaces (which corresponds to our $n=1$ case), and leave the general $n$ case as an analogous exercise to the interested reader. We note formulas (4.24) and (4.26) of \cite{Aff}, which show (in the general $n$, but $r_1=...=r_n = 1$ case) that the relative tangent space of the map $p_\pm$ has the same weights at the torus fixed points as prescribed by \eqref{eqn:projectivizations}. This equality of weights suffices for the $K$--theoretic computations performed in the present paper. \\

\subsection{} 

Consider the scheme $\fZ_{i,i+1} = \{\CF_\bullet \supset_{\circ}^i \CF'_\bullet \supset_\circ^{i+1} \CF''_\bullet\}$, which is endowed with line bundles $\CL_{i}$ and $\CL_{i+1}$ that parametrize the spaces of global sections of the length 1 quotients $\CF_i/\CF'_i$ and $\CF'_{i+1}/\CF''_{i+1}$, respectively. We have the projection maps:
$$
\xymatrix{& \fZ_{i,i+1} \ar[ld]_{\pi_-} \ar[rd]^{\pi_+} & \\ \fZ_i & & \fZ_{i+1}}
$$
that remember $\{\CF_\bullet \supset_\circ^{i} \CF'_\bullet\}$ and $\{\CF'_\bullet \supset_\circ^{i+1} \CF''_\bullet\}$, respectively. Let $\CU_{\bullet}'$ denote the universal sheaves from Subsection \ref{sub:tangent}, pulled back to either $\fZ_i$ or $\fZ_{i+1}$ via the projection map to the moduli space parametrizing the middle parabolic sheaf $\CF_\bullet'$. As in \eqref{eqn:projectivizations}, we claim that the maps $\pi_\pm$ also arise as projectivizations:
\begin{equation}
\label{eqn:projectivizations 2}
\xymatrix{\BP \left( \CU_{i+1}' + q^2\CL_i \right)  \ar[d] & \fZ_{i,i+1} \ar[ld]_-{\pi_-} \ar[rd]^-{\pi_+} \ar[r]^-{\cong} \ar[l]_-{\cong} & \BP \left( - q^2 (\CU_{i+1}')^\vee + q^2 \CL_{i+1}^\vee \right) \ar[d] \\ \fZ_i & & \fZ_{i+1}}
\end{equation}
such that the line bundle $\CL_{i+1}$ (respectively $\CL_i$) on $\fZ_{i,i+1}$ is isomorphic to $\CO(1)$ (respectively $\CO(-1)$) for the projectivizations above. We refer the reader to (4.25), (4.26) and Proposition 4.20 of \cite{W surf} for a proof of \eqref{eqn:projectivizations 2} in the context of sheaves on surfaces (which corresponds to our $n=1$ case), and leave the general $n$ case as an analogous exercise to the interested reader. As in the previous Subsection, we note that formulas (4.24) and (4.26) of \cite{Aff} show (in the general $n$, but $r_1=...=r_n = 1$ case) that the relative tangent space of the maps $\pi^\pm$ has the same weights at the torus fixed points as the projecitivization maps of \eqref{eqn:projectivizations 2}. \\

\noindent For general $i < j$, we define the so-called \textbf{fine correspondence}:
\begin{equation}
\label{eqn:dg}
\fZ_{i,...,j-1}
\end{equation} 
as  the derived fiber product of the diagram:
$$
\xymatrix{\fZ_{i,i+1} \ar[rd]^{\pi_+} & & \fZ_{i+1,i+2} \ar[ld]_{\pi_-} \ar[rd]^{\pi_+} & & ... \ar[ld]_{\pi_-} \ar[rd]^{\pi_+} & & \fZ_{j-2,j-1} \ar[ld]_{\pi_-} \\ & \fZ_{i+1} & & \fZ_{i+2} & & \fZ_{j-2} &} 
$$
Explicitly, the closed points of $\fZ_{i,...,j-1}$ are full flags of parabolic sheaves:
\begin{equation}
\label{eqn:full flag}
\{ \CF_\bullet \supset_\circ^i ... \supset_\circ^{j-1} \CF'_{\bullet} \}
\end{equation}
Let $\CL_i,...,\CL_{j-1}$ be the line bundles on $\fZ_{i,...,j-1}$ which parametrize the successive quotients in \eqref{eqn:full flag}. This derived fiber product is endowed with projection maps:
\begin{equation}
\label{eqn:zz}
\xymatrix{& \fZ_{i,...,j-1} \ar[ld]_{p_-} \ar[rd]^{p_+} & \\ \CM^{\br}_{\bd^-} & & \CM^{\br}_{\bd^+}} 
\end{equation}
which only remember $\CF_\bullet$ and $\CF'_\bullet$ of \eqref{eqn:full flag}, respectively. \\

\section{The algebra action}

\subsection{} We will now recall the action of the shifted quantum toroidal algebra on the $K$--theory of moduli spaces of parabolic sheaves (\cite{FT2}, although its precursors can be traced back to \cite{BFG}). Recall the equivariant parameters $u_1,...,u_r$ and the notation $\wa$ of Subsection \ref{sub:tangent}, and consider the following color-dependent Laurent polynomials:
\begin{align}
&\tau_+(z \text{ of color }i) = \prod_{\wa = i+1} \left( \frac {u_a}{q} - \frac {zq}{u_a}  \right) \label{eqn:tau+} \\ 
&\tau_-(z \text{ of color }i) = \prod_{\wa = i} \left( u_a - \frac z{u_a} \right) \label{eqn:tau-} 
\end{align}
Consider the operators:
$$
K^{\br}_{\bd} \xrightarrow{e_i^{(k)}} K^{\br}_{\bd + \vs^i}, \qquad K^{\br}_{\bd} \xrightarrow{f_i^{(k)}} K^{\br}_{\bd - \vs^i} 
$$
given by (the maps $p_\pm$ are as in \eqref{eqn:z}, or equivalently in \eqref{eqn:zz} for $j = i+1$):
\begin{align} 
&e_i^{(k)} (\alpha) = p_{+*} \Big(\CL_i^k \cdot p_-^{*}(\alpha) \Big) \cdot q^{d_{i+1} - d_i - 1 - r_{i+1}} \prod_{\wa = i+1} u_a\label{eqn:simple+} \\
&f_i^{(k)} (\alpha) = p_{-*} \Big(\CL_i^{k-r_i} \cdot p_+^{*}(\alpha) \Big) \cdot (-1)^{r_i + 1} q^{d_i - d_{i-1} - 2} \prod_{\wa = i} u_a \label{eqn:simple-} 
\end{align}
for any $\bd = (d_1,...,d_n)$. The maps $p_\pm$ are both proper and l.c.i., so $p_{\pm *}$ and $p_{\pm}^*$ are well-defined. In the formulas below, we will use the following notation:
$$
\zeta \left(\frac {x^{\pm 1}}{\chi^{\pm 1}_{\bla}} \right)^{\pm 1} = \prod_{\sq \in \bla} \zeta \left(\frac {x^{\pm 1}}{\chi^{\pm 1}_{\sq}} \right)^{\pm 1}
$$
for any $\br$--partition $\bla$, while for any (complexes of) vector bundles $\CE$ and $\CE'$, set:
$$
\frac {\CE'}{\CE} = [\CE'] \otimes [\CE^\vee]
$$
Then in the fixed point basis, the operators \eqref{eqn:simple+} and \eqref{eqn:simple-} take the form:
$$
\langle \bla | e_i^{(k)} | \bmu \rangle = (1-q^2) \chi_\bsq^k \wedge^\bullet \left(\frac {\chi_\bsq q^2}{\CU_{i+1}|_\bmu} \right) \cdot q^{d^\bmu_{i+1} - d^\bmu_i-1-r_{i+1}} \prod_{\wa = i+1} u_a = 
$$
\begin{equation}
\label{eqn:simple loc+}
= (q^{-1}-q) \chi_\bsq^k \tau_+(\chi_\bsq) \zeta \left(\frac {\chi_\bsq}{\chi_{\bmu}} \right)
\end{equation}
and:
$$
\langle \bmu | f_i^{(k)} | \bla \rangle = (1-q^2) \chi_\bsq^{k-r_i} \wedge^\bullet \left( - \frac {\CU_i|_\bla}{\chi_\bsq } \right) \cdot (-1)^{r_i + 1} q^{d^\bla_i - d^\bla_{i-1} - 2} \prod_{\wa = i} u_a =
$$
\begin{equation}
\label{eqn:simple loc-}
= (1-q^{-2}) \chi_\bsq^k \left[ \tau_-(\chi_\bsq) \zeta \left(\frac {\chi_\bla}{\chi_\bsq} \right) \right]^{-1}
\end{equation}
where $\bsq = \bla \backslash \bmu$, and $d^\bla, d^\bmu$ denote the degree vectors of the torus fixed points $\bla,\bmu$. \\

\subsection{} As a consequence of \eqref{eqn:universal} and \eqref{eqn:restriction v}, we have:
$$
\det \CU_i|_\bla = q^{2(d_i-d_{i-1})} \prod_{\wa = i} u_a^2
$$
With this in mind, let us also define the operators:
\begin{align}
&\psi^+_i(z) = \text{multiplication by } \wedge^\bullet \left( \frac {\CU_i}z \right) \cdot \frac {(-1)^{r_i}}{\sqrt {\det \CU_i}} \cdot q^{\sigma_i} \label{eqn:psi+} \\ 
&\psi^-_i(z) = \text{multiplication by } \wedge^\bullet \left( \frac z{\CU_i} \right) \cdot \sqrt {\det \CU_i} \cdot q^{-\sigma_i} \label{eqn:psi-}
\end{align}
(expanded in powers of $z^{\mp 1}$) which are diagonal in the basis $|\bla\rangle$:
\begin{align*} 
&\langle \bla | \psi^+_i(z) | \bmu \rangle = \delta_\bla^\bmu q^{\sigma_i} (-1)^{r_i} \prod^{\wa \equiv i}_{1\leq a \leq r} \left( \frac 1{u_a} - \frac {u_a}z \right) \zeta \left( \frac {\chi_\bla}z \right) \\
&\langle \bla | \psi^-_i(z) | \bmu \rangle = \delta_\bla^\bmu q^{-\sigma_i} \prod^{\wa \equiv i}_{1\leq a \leq r} \left(u_a - \frac z{u_a} \right) \zeta \left( \frac {\chi_\bla}z \right)
\end{align*}
where $z$ is thought to have color $i$. \\

\begin{theorem}
\label{thm:act}

Formulas \eqref{eqn:simple+}, \eqref{eqn:simple-}, \eqref{eqn:psi+}, \eqref{eqn:psi-} give an action $\UU^{\br} \curvearrowright K^{\br}$. \\

\end{theorem}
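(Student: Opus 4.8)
The plan is to check directly that the operators \eqref{eqn:simple+}--\eqref{eqn:psi-} satisfy the defining relations \eqref{eqn:rel tor 0}--\eqref{eqn:rel tor 5} of $\UU^\br$. Since the action is being specified on generators, producing a genuine module is equivalent to the vanishing of all relations as operators, and the natural arena for this check is the fixed-point basis \eqref{eqn:eq loc 1}: there the Cartan operators $\psi_i^\pm(z)$ are diagonal and the operators $e_i^{(k)}, f_i^{(k)}$ have the explicit matrix coefficients recorded above. Every relation thereby collapses to an identity of rational functions in the spectral variables $z,w$ and the equivariant parameters $q,\oq,u_\bullet$, and the whole point is that each matrix coefficient already carries exactly one factor of $\zeta$ and one of $\tau_\pm$, so the verification reduces to seeing that these factors assemble correctly under composition. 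Well-definedness of the operators is not at issue, since $p_\pm$ are proper and l.c.i.

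I would first dispatch the relations that are essentially formal. Relation \eqref{eqn:rel tor 0} is immediate because the $\psi_i^\pm(z)$ are simultaneously diagonal, while $\psi_{i,0}^+\psi_{i,0}^- = 1$ follows from the reciprocity between $\wedge^\bullet(\CU_i/z)$ and $\wedge^\bullet(z/\CU_i)$, combined with the $\det \CU_i$, sign, and $q^{\pm\sigma_i}$ normalizations built into \eqref{eqn:psi+}--\eqref{eqn:psi-} (indeed, the prefactor $(-1)^{r_i\delta_\pm^+}$ in the definition is exactly what makes this product equal $1$). Relations \eqref{eqn:rel tor 3}--\eqref{eqn:rel tor 4} express the commutation of a box-creation or box-annihilation operator with a diagonal Cartan operator: applying $e_i(z)$ (or $f_i(z)$) sends a fixed point to the one obtained by adding (or removing) a single box, and the ratio of the $\psi_j^\pm(w)$-eigenvalues before and after is, by construction of \eqref{eqn:psi+}--\eqref{eqn:psi-}, exactly the power of $\zeta(z/w)$ prescribed on the right-hand side.

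Next are the quadratic relations \eqref{eqn:rel tor 1}--\eqref{eqn:rel tor 2}. Here I would compose two creation operators on a fixed point, adding two boxes $\bsq$ and $\bsq'$ in the two possible orders. In the fixed-point basis the composite coefficient is the product of the two single-box coefficients times an interaction factor $\zeta(\chi_\bsq/\chi_{\bsq'})$ recording that the later box sees the earlier one; reversing the order replaces this by $\zeta(\chi_{\bsq'}/\chi_\bsq)$, and the prefactors $\zeta(w/z)$ and $\zeta(z/w)$ in \eqref{eqn:rel tor 1} are precisely what is needed to equate the two sides after specializing $z,w$ to $\chi_\bsq,\chi_{\bsq'}$. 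The annihilation version \eqref{eqn:rel tor 2} is identical after passing to $\CS^- = \CS^{+,\op}$. The relabeling of boxes whose colors differ by multiples of $n$ via the $\oq$-shifts \eqref{eqn:change 1}--\eqref{eqn:change 2} has to be threaded through, but this is bookkeeping rather than a real difficulty.

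The main obstacle is the mixed relation \eqref{eqn:rel tor 5}. Both orders $e_i(z)f_j(w)$ and $f_j(w)e_i(z)$ add a box of color $i$ and remove one of color $j$; for $i \neq j$ the two orders produce equal coefficients, so the commutator vanishes, forcing the Kronecker factor $\delta_i^j$, and for $i=j$ the same cancellation kills all terms except those in which one and the same box is created and then destroyed. On these diagonal terms the product of the two single-box coefficients develops a simple pole in $z/w$, whose principal part is $\delta(z/w)$; extracting the residue and reorganizing the surviving $\tau_\pm$ and $\zeta$ factors yields the difference of ratios $z^{r_{i+1}-r_i}\psi_{i+1}^+(zq^2)/\psi_i^+(z)$ and $\psi_{i+1}^-(wq^2)/\psi_i^-(w)$ on the right of \eqref{eqn:rel tor 5}. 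I expect the bulk of the work to lie in two places: producing the precise shift $z^{r_{i+1}-r_i}$ and the scalar $q^{-2}-1$, where the shifted structure of the algebra is felt and where the built-in exponent shift $k\mapsto k-r_i$ of \eqref{eqn:simple-} must be reconciled with the degree convention $\deg f_{i,k}=(-\bs^i,r_{i+1}-r_i+k)$; and tracking all of the signs $(-1)^{r_i}$ and the powers of $q^{\sigma_i}$ and $\oq$ through the residue, so that the $\psi^+$ and $\psi^-$ terms emerge with exactly the normalizations of \eqref{eqn:psi+}--\eqref{eqn:psi-} and are consistent with the periodicity \eqref{eqn:period 1}--\eqref{eqn:period 3}.
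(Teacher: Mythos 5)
The first thing to note is that the paper does not prove Theorem \ref{thm:act} at all: it is imported wholesale from \cite{FT2}, so there is no internal argument to compare yours against. Your plan --- check relations \eqref{eqn:rel tor 0}--\eqref{eqn:rel tor 5} directly on the localized fixed-point basis \eqref{eqn:eq loc 1}, where all generators have the explicit matrix coefficients recorded in the paper --- is precisely the standard route by which such theorems are established in the literature (including the cited source), and your outline is sound: localization is legitimate here because $K^\br$ is defined as the localized equivariant $K$--theory, and well-definedness of the correspondences is indeed not at issue. Two refinements are worth recording. First, your claim that for $i \neq j$ ``the two orders produce equal coefficients'' in \eqref{eqn:rel tor 5} is true but is not a symmetry statement: in the order $e_i(z)f_j(w)$ the added box $\bsq$ interacts with the boxes of $\bla \setminus \bsq'$, while in the other order it interacts with all of $\bla$, and the equality holds only after the explicit cancellation $\zeta(\chi_\bsq/\chi_{\bla \setminus \bsq'})\,\zeta(\chi_\bla/\chi_{\bsq'})^{-1} = \zeta(\chi_\bsq/\chi_\bla)\,\zeta(\chi_{\bla \cup \bsq}/\chi_{\bsq'})^{-1}$, both sides being the common factor times $\zeta(\chi_\bsq/\chi_{\bsq'})^{-1}$; the same computation kills the off-diagonal terms for $i=j$, isolating the diagonal ones. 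Second, on the diagonal terms the $\delta(z/w)$ and the two $\psi$--ratios arise from the residue identity expressing a sum over the simple poles of a single rational function in $y$ (with numerator $\tau_+(y)/\tau_-(y)$ times $\zeta$--factors) as the difference of its expansions at $y=\infty$ and $y=0$; the shift $z^{r_{i+1}-r_i}$ in \eqref{eqn:rel tor 5} is then forced by the degree mismatch $\deg \tau_+ = r_{i+1}$ versus $\deg \tau_- = r_i$ in \eqref{eqn:tau+}--\eqref{eqn:tau-}, which is exactly where the shift $\br$ enters --- so the two items you flag as ``the bulk of the work'' are real but routine. What your sketch genuinely defers, beyond these, is the boundary bookkeeping: when $\chi_\bsq/\chi_{\bsq'}$ hits a zero or pole of $\zeta$ (boxes adjacent in a row or column, or an intermediate configuration that fails to be an $\br$--partition), one must check that the vanishing of matrix coefficients matches the vanishing or blow-up of the $\zeta$--factors, since the naive rational-function argument degenerates there. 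None of this invalidates the plan; it is an honest outline of the proof the paper chose to cite rather than reproduce.
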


\noindent In equivariant cohomology, Theorem \ref{thm:act} was proved in Theorem 12.17 of \cite{FT2}, and the $K$--theoretic case that we are concerned with is proved similarly. Alternatively, the reader may find the $r_1=...=r_n=1$ case of Theorem \ref{thm:act} proved in Theorems 1.2 and 3.17 of \cite{Aff} (see also Proposition 3.27 of \loccit for formulas analogous to \eqref{eqn:simple loc+} and \eqref{eqn:simple loc-}). The case of general $r_1,...,r_n$ is a straightforward generalization. \\

\subsection{} 

We will now consider the more complicated cousins of the operators studied in the previous subsection, which were introduced in \cite{Aff} in the case $\br = (1,...,1)$. For any pair of integers $i < j$, we consider the maps as in diagram \eqref{eqn:zz} and define the following operators for any Laurent polynomial $M(z_i,...,z_{j-1})$:
\begin{equation}
\label{eqn:operators}
K^{\br}_{\bd^-} \xrightarrow{e_{[i;j)}^M} K^{\br}_{\bd^+}, \qquad \qquad K^{\br}_{\bd^+} \xrightarrow{f_{[i;j)}^M} K^{\br}_{\bd^-},
\end{equation}
\begin{align}
&e_{[i;j)}^{M}(\alpha) = p_{+*} \Big( M(\CL_i,...,\CL_{j-1}) \cdot p_-^{*}(\alpha) \Big) \cdot q^{d^+_j - d^-_i} \prod_{a=i}^{j-1} \frac {\prod_{\wb = a+1} u_b}{q^{r_{a+1}+1}} \label{eqn:fine+} \\
&f_{[i;j)}^{M}(\alpha) = p_{-*} \Big(\frac {M(\CL_i,...,\CL_{j-1})}{\CL_i^{r_i} ... \CL_{j-1}^{r_{j-1}}} \cdot p_+^{*} (\alpha) \Big)  \cdot q^{d^+_{j-1} - d_{i-1}^-} \prod_{a=i}^{j-1} \frac {\prod_{\wb = a} u_b}{(-1)^{r_a+1} q^2} \label{eqn:fine-}
\end{align}
Above, the morphisms $p_{\pm *}$ and $p_\pm^*$ are well-defined because all the maps $\pi_\pm$ in the zig-zag diagram underneath \eqref{eqn:dg} are proper and l.c.i. (and in turn, this is because of the isomorphism \eqref{eqn:projectivizations 2}, and the fact that the $\BP$'s that appear in the top row of this formula are regular subschemes of projective bundles over the varieties in the bottom row of this formula; see Section 2.3 of \cite{Surf} for context). \\

\noindent Almost word for word as in the proof of Proposition 4.23 of \cite{Aff} (see equation (5.13) of \loccitt), one can show that the operators \eqref{eqn:fine+} and \eqref{eqn:fine-} have the following matrix coefficients in the fixed point basis:
$$
\langle \bla | e_{[i;j)}^{M} | \bmu \rangle = \prod_{\bsq \in \blamu} (q^{-1} - q) \tau_+(\chi_\bsq) \zeta \left(\frac {\chi_\bsq}{\chi_{\bmu}} \right) \sum_{\syt} \frac {M(\chi_i,...,\chi_{j-1}) \prod_{i \leq a < b < j} \zeta \left( \frac {\chi_b}{\chi_a} \right)}{\left(1 - \frac {q^2\chi_i}{\chi_{i+1}} \right) ... \left(1 - \frac {q^2\chi_{j-2}}{\chi_{j-1}} \right)}
$$
$$
\langle \bmu | f_{[i;j)}^{M} | \bla \rangle = \prod_{\bsq \in \blamu} (1-q^{-2}) \left[ \tau_-(\chi_\bsq) \zeta \left(\frac {\chi_\bla}{\chi_\bsq} \right) \right]^{-1} \sum_{\syt} \frac {M(\chi_i,...,\chi_{j-1}) \prod_{i \leq a < b < j} \zeta \left( \frac {\chi_b}{\chi_a} \right)}{\left(1 - \frac {q^2\chi_i}{\chi_{i+1}} \right) ... \left(1 - \frac {q^2\chi_{j-2}}{\chi_{j-1}} \right)}
$$
where for any $\bmu \subset \bla$, a standard Young tableau (abbreviated SYT) is any labeling of the boxes of $\blamu$ with the integers $i,...,j-1$ such that the box labeled with $a$ has color $\equiv a$ modulo $n$ and weight $\chi_a$, and no box is directly above or to the right of any other box with greater label. If there are no such standard Young tableaux for given $\bla, \bmu$, or if $\bmu \not \subset \bla$, the right-hand sides of the expressions above are set to 0. \\

\begin{proposition}
\label{prop:act}
	
Under the action of Theorem \ref{thm:act}, the elements \eqref{eqn:elements} act on $K^\br$ as the operators \eqref{eqn:operators} with the same name, hence the abuse of notation. \\
	
\end{proposition}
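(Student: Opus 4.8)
The plan is to compare the two operators through their matrix coefficients in the fixed point basis. By the localization isomorphism \eqref{eqn:eq loc 1}, any operator on $K^\br$ is determined by its coefficients $\langle \bla | \cdot | \bmu \rangle$, so it suffices to show that the algebra element $\Upsilon^+(A_{[i;j)}^M)$, acting through Theorem \ref{thm:act}, has the same matrix coefficients as the geometric operator \eqref{eqn:operators}. The latter are already recorded above (the sum over standard Young tableaux), so the entire content is to compute the former and recognize it as the same expression. I would carry out the $e$-case in detail; the $f$-case is identical after passing to the opposite algebra $\CS^- = \CS^{+,\op}$.

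First I would pin down the base case $j=i+1$, where the fine correspondence degenerates to the simple correspondence: there $A_{[i;i+1)}^{z_i^k}$ is the monomial generator $z_{i1}^k$ of $\CS^+$, so by \eqref{eqn:el k} the algebraic element $e_{[i;i+1),k} = \Upsilon^+(z_{i1}^k)$ acts, by Theorem \ref{thm:act}, as the simple operator $e_i^{(k)}$ of \eqref{eqn:simple+}, whose fixed-point formula is exactly the $j=i+1$ specialization of the geometric formula for $e^M_{[i;j)}$. For general $[i;j)$ I would use that Theorem \ref{thm:act} is an algebra action and that $\Upsilon^+$ is an algebra isomorphism, so that $\Upsilon^+$ of a shuffle product acts by composition of the corresponding operators. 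Computing a composition $e_{a_1}^{(k_1)}\cdots e_{a_m}^{(k_m)}$ in the fixed point basis yields a sum over the orderings in which the boxes of $\blamu$ are added one at a time, each step contributing a simple matrix coefficient. Using that $\zeta(\chi_\bsq / -)$ splits multiplicatively into $\zeta(\chi_\bsq/\chi_\bmu)$ times contributions from the boxes added earlier, I can factor out the box-by-box prefactor $\prod_{\bsq \in \blamu}(q^{-1}-q)\tau_+(\chi_\bsq)\zeta(\chi_\bsq/\chi_\bmu)$, and what remains is precisely the symmetrization defining the shuffle product, evaluated at the weights $\chi_\bsq$. By $\fff$-linearity this shows that an arbitrary shuffle element of horizontal degree $[i;j)$ acts with matrix coefficient equal to this same prefactor times its own symmetrized expression evaluated at the box weights.

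Finally I would specialize to $A_{[i;j)}^M$ of \eqref{eqn:shuf}: its symmetrization, evaluated at the box weights, collapses to the sum over standard Young tableaux appearing in the geometric formula. Indeed, an ordering in which some box sits directly above or to the right of a box carrying a larger label forces a vanishing $\zeta$-factor (a zero of the numerator in \eqref{eqn:zeta}), so only the tableau assignments survive, and on these the denominators $\prod(1 - q^2\chi_a/\chi_{a+1})$ and the internal product $\prod_{i \le a < b < j}\zeta(\chi_b/\chi_a)$ match term-by-term. This identifies the two matrix coefficients, hence the two operators. The hard part is the middle step, namely the general shuffle-action formula together with the tableau collapse: one must track how the iterated simple correspondences of the derived fiber product \eqref{eqn:dg} reassemble into the symmetrization of \eqref{eqn:shuf}. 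This is the analogue of Proposition 3.23 of \cite{Aff}, and the vanishing of the non-tableau orderings is the genuinely combinatorial input.
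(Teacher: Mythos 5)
Your proposal is correct and takes essentially the same route as the paper, which proves this Proposition simply by noting it is a near-verbatim extension of the $\br = (1,...,1)$ case treated in \cite{Aff}: one compares matrix coefficients in the fixed-point basis, reassembles compositions of simple correspondences into the shuffle symmetrization, and collapses the evaluation at box weights to the sum over standard Young tableaux via vanishing $\zeta$-factors. The detail you flag as the hard middle step is precisely the content of the analogue of Proposition 3.23 of \cite{Aff} that the paper invokes, so there is no gap beyond what the paper itself leaves to the reader.
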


\noindent The Proposition above was proved in Theorem 1.2 of \cite{Aff} in the case $\br = (1,...,1)$, and the proof in the general case is almost identical, hence we leave it to the interested reader. With Proposition \ref{prop:act} in mind, it makes sense to consider the following particular instances of the operators \eqref{eqn:operators}, which are induced by the action of the elements \eqref{eqn:el k} and \eqref{eqn:el mu} under Theorem \ref{thm:act}:
\begin{equation}
\label{eqn:op k}
e_{[i;j),k} = e_{[i;j)}^{M(z_i,...,z_{j-1}) \mapsto z_i^k} 
\end{equation} 
\begin{equation}
\label{eqn:op mu}
e^{(k)}_{[i;j)} = e_{[i;j)}^{M(z_i,...,z_{j-1}) \mapsto \prod_{a=i}^{j-1} \left( z_a \oq^{\frac {2a}n} \right)^{\left\lceil \frac {(a-i+1)k}{j-i} \right\rceil - \left\lceil \frac {(a-i)k}{j-i} \right\rceil}}
\end{equation}
for all $i < j$ and $k \in \BZ$, as well as the analogous notations for the $f$'s. We will also write $e_{[i;i),k} = f_{[i;i),k} = \delta_k^0$, to match with the convention \eqref{eqn:rule}. \\

\subsection{} Given integers $i,j$, let us consider the following dg scheme for all $s \leq \min(i,j)$:
\begin{equation}
\label{eqn:fv}
\fV_{i;s;j} = \fZ_{s,...,i-1} \underset{\CM^{\br}_{\bd}}{\times} \fZ_{s,...,j-1}
\end{equation}
Its closed points are given by:
\begin{equation}
\label{eqn:fv closed}
\fV_{i;s;j} = \Big\{
\tCF_\bullet' \subset_\circ^{i-1} ... \subset_\circ^{s+1} \CF_\bullet' \subset_\circ^s \CF_\bullet \supset_\circ^s \CF_\bullet'' \supset_\circ^{s+1} ... \supset_\circ^{j-1} \tCF_\bullet'' \Big\}
\end{equation}
where the moduli space $\CM^\br_{\bd}$ that features in \eqref{eqn:fv} is the one which parametrizes the parabolic sheaf $\CF_\bullet$. Let us consider the following line bundles on $\fV_{i;s;j}$:
\begin{equation}
\label{eqn:notation 1}
\CL'_{i-1},...,\CL'_s, \CL''_s,..., \CL''_{j-1}
\end{equation}
which parametrize the successive one-dimensional quotients of the inclusions in \eqref{eqn:fv closed} (read left-to-right). Also consider the following universal sheaves on $\fV_{i;s;j}$:
\begin{equation}
\label{eqn:notation 2}
\tCU_s',..., \CU_s', \CU_s, \CU_s'',..., \tCU_s''
\end{equation}
which keep track of the parabolic sheaves in \eqref{eqn:fv closed}, respectively. We have maps:
\begin{equation}
\label{eqn:correspondence final}
\xymatrix{& \fV_{i;s;j} \ar[ld]_{\pi'_s} \ar[rd]^{\pi''_s} \\ 
\CM^\br_{\tbd'} & & \CM^\br_{\tbd''}} 
\end{equation}
In terms of closed points \eqref{eqn:fv closed}, $\pi'_s$, $\pi''_s$ remember the parabolic sheaves $\tCF'_\bullet$, $\tCF''_\bullet$, respectively. As a consequence of Proposition \ref{prop:act}, we conclude that the element:
$$
W_{ij}^{k} \in \UU^\br 
$$ 
of \eqref{eqn:w gen} acts on $K^\br$ by the correspondence:
\begin{equation}
\label{eqn:correspondence}
W_{ij}^k = \sum_{s \leq \min(i,j)} \pi'_{s*} \left( \sum^{a \geq 1, b \geq 0, c \geq 1}_{a+b+c = k + r_s - r_j} \frac {{\CL_s'}^{a} \cdot (-1)^b\wedge^b(\CU_s) \cdot  {\CL_s''}^{c}}{(\CL_s'')^{r_s}...(\CL_{j-1}'')^{r_{j-1}}} \cdot {\pi_s''}^* \right) \cdot 
\end{equation}
$$
(-1)^{\sum_{s < t < j} r_s} q^{2\sigma_j  - \sigma_i - i - j + \tilde{d}'_i + \tilde{d}''_{j-1} + (j-s)(2k - 2r_j) + 2s - 2d_s} \prod_{s < t \leq i}^{\wb = t} u_b  \prod_{s < t \leq j-1}^{\wb = t} u_b 
$$
\footnote{We make the same convention as in the footnote directly after \eqref{eqn:w gen}: if $s = i$ (respectively $s = j$), the corresponding summand in the formula above must have $a=0$ (respectively $c = 0$)} where we write $\tbd',...,\bd',\bd, \bd'',...,\tbd''$ for the degree vectors of the parabolic sheaves in \eqref{eqn:fv closed}, read left-to-right. Theorem \ref{thm:main} is equivalent to the fact that the right-hand side of \eqref{eqn:correspondence} is 0 as an endomorphism of $K^\br$, for all natural numbers $k>r_j$, which we will now prove. \\

\begin{proof} \emph{of Theorem \ref{thm:main}:} Let us consider the dg scheme:
\begin{equation}
\label{eqn:fw} 
\fW_{i;s;j} = \fZ_{s-1,...,i-1} \underset{\fZ_{s-1}}{\times} \fZ_{s-1,...,j-1}
\end{equation} 
whose closed points take the form:
\begin{equation}
\label{eqn:extended flag}
\xymatrix{& {\widetilde{\CF}}_\bullet & \\
& {\widetilde{\CF}}_\bullet' \subset_\circ^{i-1} ... \subset_\circ^{s+1} \CF_\bullet' \subset_\circ^s \CF_\bullet \supset_\circ^s \CF_\bullet'' \supset_\circ^{s+1} ... \supset_\circ^{j-1} {\widetilde{\CF}}_\bullet'' \ar@{^{(}->}[u]_\circ^{s-1} &} 
\end{equation}
We will write $\CL_{s-1}$ for the line bundle on $\fW_{i;s;j}$ which parametrizes the one-dimensional quotient corresponding to the vertical inclusion, and apply the notations \eqref{eqn:notation 1} and \eqref{eqn:notation 2} to $\fW_{i;s;j}$ as well. Consider the maps:
$$
\xymatrix{\fW_{i;s+1;j} \ar[rd]_{\delta_s} & & \fW_{i;s;j} \ar[ld]^{\e_s} \\ & \fV_{i;s;j} &}
$$
where $\e_s$ forgets $\tCF_\bullet$, while $\delta_s$ is the embedding of the locus $\CF_\bullet' = \CF_\bullet''$. Lemma 6.2 of \cite{W surf} states that we have the following equality of $K$--theory classes on $\fV_{i;s;j}$: \footnote{Note that \loccit actually proves \eqref{eqn:above} in the case when $i = j =s+1$, but the general case is obtained by pull-back under the natural forgetful map $\fV_{i;s;j} \rightarrow \fV_{s+1;s;s+1}$}
\begin{equation}
\label{eqn:above}
\delta_{s*}(\CL_s^k) - \e_{s*}(\CL_{s-1}^{k-r_s}) (-1)^{r_s} q^{2(k-r_s)} \det \CU_s = \int_{\infty - 0} \frac {y^k \wedge^\bullet \left( \frac {\CU_s}y \right)}{\left(1-\frac y{\CL_s'}\right)\left(1-\frac y{\CL_s''}\right)}
\end{equation}
for all $k \in \BZ$, where we write:
$$
\int_{\infty - 0} f(y) = \underset{y = \infty}{\text{Res}} \frac {f(y)}y - \underset{y = 0}{\text{Res}} \frac {f(y)}y
$$
Let us plug $k = k+r_s-r_j$ and $\det \CU_s = q^{2d_s - 2 d_{s-1}} \prod_{\wb = s} u_b^2$ into \eqref{eqn:above}:
$$
\delta_{s*}(\CL_s^{k+r_s-r_j}) - \e_{s*}(\CL_{s-1}^{k-r_j}) (-1)^{r_s} q^{2(k-r_j+d_s - d_{s-1})} \prod_{\wb = s} u_b^2 = \int_{\infty - 0} \frac {y^{k+r_s-r_j} \wedge^\bullet \left( \frac {\CU_s}y \right)}{\left(1-\frac y{\CL_s'}\right)\left(1-\frac y{\CL_s''}\right)}
$$
Finally, let us multiply both sides of the equation above by:
\begin{equation}
\label{eqn:as} 
A_s = \frac {\text{second row of \eqref{eqn:correspondence}}}{(\CL_s'')^{r_s}...(\CL_{j-1}'')^{r_{j-1}}}
\end{equation}  
As a consequence of the elementary identity:
\begin{equation}
\label{eqn:as as} 
A_s = A_{s-1} \cdot \frac {(-1)^{r_s} \CL_{s-1}^{r_{s-1}}}{q^{2(k-r_j+d_s-d_{s-1})} \prod_{\wb = s} u_b^2} 
\end{equation}
\footnote{The apparent discrepancy between the powers of $q$ in the left and right-hand sides of \eqref{eqn:as as} is due to the fact that on the second row of \eqref{eqn:correspondence}, the number $d_s$ refers to the degree of the middle flag of sheaves in \eqref{eqn:fv closed}; this number for $A_{s-1}$ is 1 less than this number for $A_s$} we conclude that:
\begin{equation}
\label{eqn:stevie}
\delta_{s*}(\CL_s^{k+r_s-r_j} A_s) - \e_{s*}(\CL_{s-1}^{k+r_{s-1}-r_j}A_{s-1}) =
\end{equation}
$$
= \int_{\infty - 0} \frac {y^{k+r_s-r_j} \wedge^\bullet \left( \frac {\CU_s}y \right)}{\left(1-\frac y{\CL_s'}\right)\left(1-\frac y{\CL_s''}\right)} \cdot \frac {\text{second row of \eqref{eqn:correspondence}}}{(\CL_s'')^{r_s}...(\CL_{j-1}'')^{r_{j-1}}}
$$
The equality above also holds for $s = \min(i,j)$, with the distinction that in this case the term $\delta_{s*}$ in the LHS vanishes, and the term $1 - \frac y{\CL_s'}$ (respectively $1 - \frac y{\CL_s''}$ ) in the RHS is replaced by 1 if $i \leq j$ (respectively if $i \geq j$). For fixed $\tbd'$ and $\tbd''$, we may interpret \eqref{eqn:stevie} as an equality of correspondences via the maps in \eqref{eqn:correspondence final}, i.e.:
\begin{multline}
\pi'_{s*} \Big( \delta_{s*}(\CL_s^{k+r_s-r_j} A_s) \cdot {\pi''_s}^* \Big) - \pi'_{s*} \Big( \e_{s*}(\CL_{s-1}^{k+r_{s-1}-r_j}A_{s-1}) \cdot {\pi''_s}^* \Big) =
 \\ = \pi'_{s*} \left( \int_{\infty - 0} \frac {y^{k+r_s-r_j} \wedge^\bullet \left( \frac {\CU_s}y \right)}{\left(1-\frac y{\CL_s'}\right)\left(1-\frac y{\CL_s''}\right)} \cdot \frac {\text{second row of \eqref{eqn:correspondence}}}{(\CL_s'')^{r_s}...(\CL_{j-1}'')^{r_{j-1}}} \cdot {\pi''_s}^* \right) 
\label{eqn:fire}
\end{multline}
as an equality of operators $K_T(\CM^\br_{\tbd''}) \rightarrow K_T(\CM^\br_{\tbd'})$. Consider the generating series:
\begin{equation}
\label{eqn:series e final}
e_{[s;i)}(y) = - \sum_{k=1}^\infty \frac {e_{[s;i),k}}{y^k} = p_{+*} \left[ \frac 1{1- \frac y{\CL_s'}} \cdot p_-^{*} \right] q^{\td'_i - d_s} \prod_{a=s}^{i-1} \frac {\prod_{\wb = a+1} u_b}{q^{r_{a+1}+1}} 
\end{equation}
\begin{equation}
\label{eqn:series f final}
f_{[s;j)}(y) = - \sum_{k=1}^\infty \frac {f_{[s;j),k}}{y^k} = 
\end{equation}
$$
= p_{-*} \left[\frac {1}{\left(1- \frac y{\CL_s''}\right)(\CL''_s)^{r_s} ... (\CL''_{j-1})^{r_{j-1}}} \cdot p_+^{*} \right]  q^{\td''_{j-1} - d_{s-1}} \prod_{a=s}^{j-1} \frac {\prod_{\wb = a} u_b}{(-1)^{r_a+1} q^2} 
$$
of the operators \eqref{eqn:op k}, with the notations as in \eqref{eqn:fine+} and \eqref{eqn:fine-}. Thus:
$$
\text{RHS of \eqref{eqn:fire}} = (-1)^{j-s} q^{2\sigma_j - 2 \sigma_s + (j-s)(2k - 2r_j + 1)} \int_{\infty - 0} y^{k+r_s-r_j}  e_{[s;i)}(y) \psi_s^+(y) f_{[s;j)}(y) 
$$
Let us sum the left-hand side of \eqref{eqn:fire} over all integers $s \leq \min(i,j)$, which is a well-defined operation because only finitely many terms in the sum over $s$ are non-zero for any fixed $\tbd'$ and $\tbd''$. \\

\begin{claim}
\label{claim:delta epsilon}

In the left-hand side of \eqref{eqn:fire}, the summand involving $\delta_{s*}$ precisely cancels out the summand involving $\e_{s+1*}$, for any $s$. \\

\end{claim}

\noindent We will prove Claim \ref{claim:delta epsilon} at the end of the paper, but let us first continue the proof of Theorem \ref{thm:main}. Using the Claim, we conclude that:
\begin{multline}
0 = \sum_{s \leq \min(i,j)} (-1)^{j-s} q^{2\sigma_j - 2 \sigma_s + (j-s)(2k - 2r_j + 1)} \\ \cdot \int_{\infty - 0} y^{k+r_s-r_j}  e_{[s;i)}(y) \psi_s^+(y) f_{[s;j)}(y) \label{eqn:sum}
\end{multline}
The integral in \eqref{eqn:sum} consists of two terms: the residue at $\infty$ and the residue at $0$, and formula \eqref{eqn:sum} states that they are equal. The residue at $y = \infty$ is equal to:
$$
W_{ij}^k = \sum_{s \leq \min(i,j)} (-1)^{j-s} q^{2\sigma_j - 2 \sigma_s + (j-s)(2k - 2r_j + 1)}  \sum_{a+b+c = k + r_s - r_j}^{a \geq 1, b \geq 0, c \geq 1} e_{[s;i),a} \psi^+_{s,b} f_{[s;j),c} 
$$
Meanwhile, the residue at $0$ vanishes if $k >  r_j$. This is because $e_{[s;i)}(y)$ and $f_{[s;j)}(y)$ are regular at $y = 0$ (as is apparent from formulas \eqref{eqn:series e final} and \eqref{eqn:series f final}), while $\psi_s^+(y)$ has a pole of order exactly $r_s$ (as is apparent from formula \eqref{eqn:psi+}). This implies that $W_{ij}^k = 0$ if $k > r_j$, thus completing the proof of the Theorem. \\

\begin{proof} \emph{of Claim \ref{claim:delta epsilon}:} We have the equality:
$$
\pi'_{s*} \Big( \delta_{s*}(\CL_{s}^{k+r_{s}-r_j} A_s)  {\pi_s''}^* \Big) = \rho'_* \Big(\CL_{s}^{k+r_{s}-r_j}A_{s}   {\rho''}^* \Big) = \pi'_{s+1*} \Big( \e_{s+1*}(\CL_{s}^{k+r_{s}-r_j}A_{s})  {\pi''_{s+1}}^* \Big)
$$
where we consider the maps:
$$
\xymatrix{& \fW_{i;s+1;j} \ar[ld]_{\rho'} \ar[rd]^{\rho''} \\ 
\CM^\br_{\tbd'} & & \CM^\br_{\tbd''}} 
$$
In terms of \eqref{eqn:extended flag}, $\rho'$, $\rho''$ remember the parabolic sheaves $\tCF'_\bullet$, $\tCF''_\bullet$, respectively.

\end{proof} 

\end{proof}

\end{document}